\newtheorem{theorem}{Theorem}[section]
\newtheorem{lemma}{Lemma}[section]
\newtheorem{remark}{Remark}[section]
\newtheorem{definition}{Definition}[section]
\newtheorem{example}{Example}[section]
\journal{Journal of \LaTeX\ Templates}
\begin{document}

\begin{frontmatter}

\title{Central local discontinuous Galerkin method for the space fractional diffusion equation}

%% Group authors per affiliation:
\author[mymainaddress]{Jing Sun}
%\address{Radarweg 29, Amsterdam}
\ead{sunj2015@lzu.edu.cn}

%% or include affiliations in footnotes:
\author[mymainaddress]{Daxin Nie}
\ead{ndx1993@163.com}

\author[mymainaddress]{Weihua Deng\corref{mycorrespondingauthor}}
\cortext[mycorrespondingauthor]{Corresponding author}
\ead{dengwh@lzu.edu.cn}

\address[mymainaddress]{School of Mathematics and Statistics, Gansu Key Laboratory of Applied Mathematics and Complex Systems, Lanzhou University, Lanzhou 730000, P.R. China}

\begin{abstract}
	
	This paper provides the semi-discrete scheme by the central local discontinuous Galerkin method for space fractional diffusion equation on two sets of overlapping cells, and then we give the stability analysis and error estimates for the scheme. Lastly, we verify the effectiveness of the proposed scheme by the one- and two-dimensional numerical experiments.
	
\end{abstract}

\begin{keyword}
central local discontinuous Galerkin method \sep space fractional diffusion equation\sep stability analysis\sep error estimates
\MSC[2010] 65M60 \sep 35R11 \sep 97N40
\end{keyword}

\end{frontmatter}

%\linenumbers

\section{Introduction}
\label{}
 L\'evy processes (and the related ones) are effective microscopic models for anomalous diffusions, and their corresponding macroscopic models governing some interesting/useful statistical observables are generally the integral-differential equations \cite{Deng2018}, some special cases of which are so-called fractional partial differential equations (FPDEs). Anomalous diffusions are ubiquitous in the natural world, such as, physics \cite{Metzler2000,Metzler20002}, biochemistry \cite{Yuste2004}, hydrology \cite{Liu2004}, etc.
%Nowadays, fractional calculus has attracted
%widespread attention because fractional derivatives can describe some materials with memory and heritability and physical processes more exactly. Fractional partial differential equations (FPDEs) are also applied to many fields accordingly, such as physics       \cite{Metzler2000,Metzler20002}, biochemistry \cite{Yuste2004}, hydrology \cite{Liu2004} and so on.
The nonlocality of the fractional operators makes it a challenging work to find the exact solution of FPDEs, hence how to efficiently get the numerical solutions of FPDEs becomes a big deal. So
far, some progresses have been made for numerically solving FPDEs, such as, finite difference, finite element, spectral method, discontinuous Galerkin (DG), local  discontinuous Galerkin (LDG), etc (see \cite{Basu2012,Bu2014,Chen2014,Cui2009,Deng2013,Liu2004,Mao2016,Xu2014,Zhang2010}).

Recently, Ref. \cite{Liu2007} introduces a central DG method to solve the system of conservation laws based on the central scheme \cite{Nessyahu1990}, which needs to use the redundant representation of the solution on overlapping cells depicted in \cite{Cockburn1998}. Compared with traditional DG, the central DG uses  duplicative information on overlapping cells, which  doesn't need numerical fluxes any more to provide the information at the interface of cells, since the evaluation of the solution at the interface is in the middle of the staggered mesh; moreover, this scheme covers the two main advantages of DG method, i.e., the  flexibility of grid subdivision and excellent parallel efficiency. Afterwards, Ref. \cite{Liu2008} gives the stability and error analysis for the central DG scheme. Reference \cite{Cockburn1998} provides the LDG finite element method  for the time dependent convection-diffusion systems. Reference \cite{Deng2013} introduces LDG method with suitable fluxes to solve the spatial fractional diffusion equation and then \cite{Qiu2015} generalizes it to the two-dimensional cases. Later, Ref. \cite{Liu2011} introduces a central LDG based on central DG and LDG  for traditional diffusion equations, which gives two versions of the central LDG schemes and provides the stability and error analysis.  To the best of our knowledge, it seems that there is no work solving the space fractional diffusion equation by the central LDG, especially for higher dimensional cases.

In this paper, we use the central  LDG method  to solve the two-dimensional space fractional diffusion equation with the homogeneous Dirichlet boundary condition, i.e., find $u=u(x,y,t)$ satisfying
\begin{equation}\label{equation2D}
\left\{
\begin{aligned}
&\frac{\partial u(x,y,t)}{\partial t}-d_1(~_{-\infty}D_{x}^{\alpha}u+~_{x}D_{\infty}^{\alpha}u)
\\&~~~~~~~~~~~~~-d_2(~_{-\infty}D_{y}^{\beta}u+~_{y}D_{\infty}^{\beta}u)=f(x,y,t)~~~(x,y,t)\in \Omega\times[0,T],\\
&u(x,y,0)=g(x,y)~~~~~~~~~~~~~~~~~~~~~~~~~~~~~~~~~~~~~~~(x, y)\in \Omega,\\
&u(x,y,t)=0~~~~~~~~~~~~~~~~~~~~~~~~~~~~~~~~~~~~~~~~~~(x, y)\in \mathbb{R}^2\backslash \Omega, ~~t\in [0,T],
\end{aligned}
\right.
\end{equation}
where $\Omega=(0,1)\times(0,1)$, $1<\alpha, \beta<2$, and $d_1, d_2>0$ are two constants;  $~_{-\infty}D_{x}^{\alpha}u$ and $_{x}D_{\infty}^{\alpha}u$ are the left- and right-sided Riemann-Liouville derivatives, respectively. Since ${\rm supp} (u)\subset\Omega$, we have $_{-\infty}D_x^{\alpha}u(x,y,t)$ =$~_0D_x^{\alpha}u(x,y,t)$, $_{x}D_\infty^{\alpha}u(x,y,t)$ =$~_xD_1^{\alpha}u(x,y,t)$, $(x,y)\in \Omega$. For convenience, we set $d_1=d_2=1$ in this paper.
Here, we solve Eq. (\ref{equation2D}) numerically by using the duplicative information on overlapping cells and avoid using the numerical flux to define the interface values of the solution. Meanwhile, in order to guarantee the stability and convergence of our scheme, we modify the LDG scheme in \cite{Qiu2015} according to the theoretical prediction and the feature of the central LDG scheme in \cite{Liu2011}.

The remainder of this paper is arranged as follows. Sec. 2 provides some necessary definitions and properties of the fractional derivatives and integrals. In Sec. 3, we give the spatial semi-discrete scheme by the central LDG method. Sec. 4 presents the stability analysis and error estimates of the designed scheme. Sec. 5 discusses the implementation details of numerical simulations. We conclude the paper with discussions in the last section.

%Numerical results are presented in the last section.

\section{Preliminaries}
\label{}
In this section, we recall some definitions and properties of fractional derivatives, fractional integrals, and fractional Sobolev spaces.
\begin{definition}[\cite{Podlubny1999}]\label{def1}
	The left- and right-sided Riemann-Liouville fractional integrals of order $\alpha~(\alpha>0)$ are defined by
	\begin{equation*}
	_{-\infty}I^{\alpha}_xu(x)=\frac{1}{\Gamma(\alpha)}\int^x_{-\infty}(x-\xi)^{\alpha-1}u(\xi)d\xi,
	\end{equation*}
	\begin{equation*}
	_xI^{\alpha}_{\infty}u(x)=\frac{1}{\Gamma(\alpha)}\int^{\infty}_x(\xi-x)^{\alpha-1}u(\xi)d\xi.
	\end{equation*}
\end{definition}
\begin{definition}[\cite{Podlubny1999}]\label{def2}
	The left- and right-sided Riemann-Liouville fractional derivatives of order $\alpha~(\alpha>0)$ are defined by
	\begin{equation*}
	_{-\infty}D^{\alpha}_xu(x)=\frac{1}{\Gamma(n-\alpha)}\frac{d^n}{dx^n}\int^x_{-\infty}(x-\xi)^{n-\alpha-1}u(\xi)d\xi,
	\end{equation*}
	\begin{equation*}
	_xD^{\alpha}_{\infty}u(x)=\frac{(-1)^n}{\Gamma(n-\alpha)}\frac{d^n}{dx^n}\int^{\infty}_x(\xi-x)^{n-\alpha-1}u(\xi)d\xi,
	\end{equation*}
	where $n-1\leq\alpha<n$, $n\in \mathbb{N}$. % \mathbb{N}^*
\end{definition}

\begin{definition}[\cite{Ervin2006}]
	Let $\alpha>0$. Define the semi-norm
	\begin{equation*}
	|u|_{J^\alpha_L(\mathbb{R})}:=\|~_{-\infty}D^\alpha_xu\|_{L^2(\mathbb{R})}
	\end{equation*}
	and norm
	\begin{equation*}
	\|u\|_{J^\alpha_L(\mathbb{R})}:=\left (\|u\|^2_{L^2{(\mathbb{R})}}+|u|^2_{_{J^\alpha_L(\mathbb{R})}}\right )^{1/2},
	\end{equation*}
	and $ J^\alpha_{L}(\mathbb{R}) $ denotes the closure of $C_0^\infty(\mathbb{R})$ with respect to $ \|\cdot\|_{J^\alpha_L(\mathbb{R})} $.
\end{definition}

\begin{definition}[\cite{Ervin2006}]
	Let $\alpha>0$. Define the semi-norm
	\begin{equation*}
	|u|_{J^\alpha_R(\mathbb{R})}:=\|~_{x}D^\alpha_{\infty}u\|_{L^2(\mathbb{R})}
	\end{equation*}
	and norm
	\begin{equation*}
	\|u\|_{J^\alpha_R(\mathbb{R})}:=\left (\|u\|^2_{L^2{(\mathbb{R})}}+|u|^2_{_{J^\alpha_R(\mathbb{R})}}\right )^{1/2},
	\end{equation*}
	and let $ J^\alpha_{R}(\mathbb{R}) $ denote the closure of $C_0^\infty(\mathbb{R})$ with respect to $ \|\cdot\|_{J^\alpha_R(\mathbb{R})} $.
\end{definition}

\begin{definition}[\cite{Ervin2006}]
	Let $\Omega=(a,b)$ be a bounded open subinterval of $\mathbb{R}$. Then we define the spaces $J^\alpha_{L,0}(\Omega),J^\alpha_{R,0}(\Omega)$ as the closures of $C^\infty_0(\Omega)$ under their respective norms.
\end{definition}

\begin{definition}[\cite{Ervin2006}]
	For $0\leq\alpha<\infty$, we define the space
	\begin{equation*}
	H^{\alpha}(\mathbb{R}^{n}):=\{u\mid u \in L^{2}(\mathbb{R}^{n}),~(1+|\omega|^{2})^{\frac{\alpha}{2}}\hat{u}(\omega)\in L^{2}(\mathbb{R}^{n})\}
	\end{equation*}
	with the norm
	\begin{equation*}
	\|u\|_{H^{\alpha}(\mathbb{R}^n)}:=(\|u\|^{2}_{L^2{(\mathbb{R}^n)}}+|u|^2_{H^\alpha(\mathbb{R}^{n})})^{1/2} ~~~\forall~ u\in H^{\alpha}(\mathbb{R}^{n}),
	\end{equation*}
	where $|u|_{H^\alpha(\mathbb{R}^{n})}:=\||\omega|^{\alpha}\hat u\|_{L^2(\mathbb{R}^n)}$, and $\hat{u}$ denotes the Fourier transform of $u$.
\end{definition}
\begin{definition}[\cite{Ervin2007,Li2009}]
	For $\Omega\subset \mathbb{R}^{n}$, we define
	\begin{equation*}
	H^{\alpha}(\Omega):=\{v|_{\Omega}\ |v\in H^{\alpha}(\mathbb{R}^{n})\}
	\end{equation*}
	with the norm
	\begin{equation*}
	\|v\|_{H^{\alpha}(\Omega)}:=\inf_{\substack{
			\tilde{v}\in H^{\alpha}(\mathbb{R}^{n})\\
			\tilde{v}\mid_{\Omega}=v}} \|\tilde{v} \|_{H^\alpha(\mathbb{R}^{n})}~~~~\forall~ v\in H^\alpha(\Omega).
	\end{equation*}
	Furthermore, define $H_{0}^{\alpha}(\Omega)$ as the closure of $C^\infty_0(\Omega)$ with respect to $\|\cdot\|_{H^{\alpha}(\Omega)}$; for $\Omega=(a,b)\subset \mathbb{R}$, define $~_0H^{\alpha}(\Omega)$ as the closure of $~_0C^{\infty}(\Omega)$ with respect to $\|\cdot\|_{H^{\alpha}(\Omega)}$, where
	\begin{equation*}
	~_0C^\infty(\Omega)=\{v|v\in C^{\infty}(\Omega)\  with\  compact\  support \ in\  (a,b]\}.	\end{equation*}
\end{definition}

\begin{remark}
	It is known (see \cite{Li2009,Lions1972}) that
	\begin{equation*}
	~_0H^{\alpha}(\Omega)=H^{\alpha}(\Omega)=H_0^{\alpha}(\Omega),\quad if\ 0<\alpha<\frac{1}{2}.
	\end{equation*}
\end{remark}

\begin{definition}[\cite{Ervin2007}]
	Let $\alpha>0$. Define the semi-norms
	\begin{equation*}
	\begin{aligned}
	&|u|_{J^\alpha_{L,0\pi}(\mathbb{R}^2)}:=\|~_{-\infty}D^\alpha_xu\|_{L^2{(\mathbb{R}^2)}},\\
	&|u|_{J^\alpha_{L,\pi/2}(\mathbb{R}^2)}:=\|~_{-\infty}D^\alpha_yu\|_{L^2{(\mathbb{R}^2)}},\\
	&|u|_{J^\alpha_{L,\pi}(\mathbb{R}^2)}:=\|~_xD^\alpha_\infty u\|_{L^2{(\mathbb{R}^2)}},\\
	&|u|_{J^\alpha_{L,3\pi/2}(\mathbb{R}^2)}:=\|~_yD^\alpha_\infty
	u\|_{L^2{(\mathbb{R}^2)}},
	\end{aligned}
	\end{equation*}
	and norms
	\begin{equation*}
	\begin{aligned}
	&\|u\|_{J^\alpha_{L,0\pi}(\mathbb{R}^2)}:=\left(\|u\|^2_{L^2(\mathbb{R}^2)}+|u|^2_{J^\alpha_{L,0\pi}(\mathbb{R}^2)}\right)^{1/2},\\
	&\|u\|_{J^\alpha_{L,\pi/2}(\mathbb{R}^2)}:=\left (\|u\|^2_{L^2(\mathbb{R}^2)}+|u|^2_{J^\alpha_{L,\pi/2}(\mathbb{R}^2)}\right)^{1/2},\\
	&\|u\|_{J^\alpha_{L,\pi}(\mathbb{R}^2)}:=\left (\|u\|^2_{L^2(\mathbb{R}^2)}+|u|^2_{J^\alpha_{L,\pi}(\mathbb{R}^2)}\right )^{1/2},\\
	&\|u\|_{J^\alpha_{L,3\pi/2}(\mathbb{R}^2)}:=\left (\|u\|^2_{L^2(\mathbb{R}^2)}+|u|^2_{J^\alpha_{L,3\pi/2}(\mathbb{R}^2)}\right)^{1/2},
	\end{aligned}
	\end{equation*}
	and let $ J^\alpha_{L,0\pi}(\mathbb{R}^2) $, $ J^\alpha_{L,\pi/2}(\mathbb{R}^2) $, $ J^\alpha_{L,\pi}(\mathbb{R}^2) $, and $ J^\alpha_{L,3\pi/2}(\mathbb{R}^2) $ denote the closures of $C^\infty_0(\mathbb{R}^2)$ with respect to $\|\cdot\|_{J^\alpha_{L,0\pi}(\mathbb{R}^2)}$, 	$\|\cdot\|_{J^\alpha_{L,\pi/2}(\mathbb{R}^2)}$, 	$\|\cdot\|_{J^\alpha_{L,\pi}(\mathbb{R}^2)}$, and $\|\cdot\|_{J^\alpha_{L,3\pi/2}(\mathbb{R}^2)}$, respectively.
\end{definition}

\begin{definition}[\cite{Ervin2007}]
	Let $\Omega \subset \mathbb{R}^2$.	Define the spaces $ J^\alpha_{L,0\pi,0}(\Omega) $, $ J^\alpha_{L,\pi/2,0}(\Omega) $, $ J^\alpha_{L,\pi,0}(\Omega) $, $ J^\alpha_{L,3\pi/2,0}(\Omega) $ as the closures of $C^\infty_0(\Omega)$ under their respective norms.
\end{definition}

\begin{lemma}[\cite{Ervin2007}]\label{lemequnorm2D}
	Let $\Omega \subset \mathbb{R}^2$, $ \alpha>0 $, and $ \alpha\neq n-1/2 $, $n\in\mathbb{N}$ be given. The spaces $ J^\alpha_{L,0\pi,0}(\Omega) $, $ J^\alpha_{L,\pi/2,0}(\Omega) $, $ J^\alpha_{L,\pi,0}(\Omega) $, $ J^\alpha_{L,3\pi/2,0}(\Omega) $, and $H^\alpha_0(\Omega)$ are equal  with equivalent semi-norms and norms. Similarly for $\Omega \subset \mathbb{R}$, we have that  $J^{\alpha}_{L,0}(\Omega)$, $J^{\alpha}_{R,0}(\Omega)$, and $H^\alpha_0(\Omega)$ are equal with equivalent semi-norms and norms.
\end{lemma}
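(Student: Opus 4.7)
The plan is to reduce every claim to the full space $\mathbb{R}^n$ by zero-extension, to identify the seminorms via the Fourier symbols of the one-sided Riemann--Liouville operators, and then to descend to $\Omega$ using density of $C_0^\infty(\Omega)$ in each space involved.

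For the one-dimensional statement I would first take $v\in C_0^\infty(\Omega)$ and let $\tilde v\in C_0^\infty(\mathbb{R})$ be its extension by zero. The Fourier symbols of $\,_{-\infty}D_x^{\alpha}$ and $\,_xD_\infty^{\alpha}$ are $(i\omega)^\alpha$ and $(-i\omega)^\alpha$, each of modulus $|\omega|^\alpha$, so Plancherel gives the pointwise equality
\begin{equation*}
|\tilde v|_{J_L^\alpha(\mathbb{R})}=|\tilde v|_{J_R^\alpha(\mathbb{R})}=|\tilde v|_{H^\alpha(\mathbb{R})}.
\end{equation*}
Combined with the $L^2$-isometry of the zero-extension, this yields the equality of the three full norms on the test class. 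To pass from $\mathbb{R}$ to $\Omega$ one must identify $H_0^\alpha(\Omega)$ with the subspace of $H^\alpha(\mathbb{R})$ supported in $\overline\Omega$; this is the classical Lions--Magenes identification, valid precisely when $\alpha\neq n-1/2$. A density argument then closes the 1D half.

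The two-dimensional case follows the same template with the four directional symbols $(\pm i\omega_1)^\alpha$ and $(\pm i\omega_2)^\alpha$. One direction of each seminorm comparison is immediate from $|\omega_j|^{2\alpha}\leq(|\omega_1|^2+|\omega_2|^2)^\alpha$, which yields $|v|_{J_{L,\theta}^\alpha(\mathbb{R}^2)}\leq|v|_{H^\alpha(\mathbb{R}^2)}$ for every $\theta$. The opposite bound is the main obstacle, because on the full plane the scalar symbol $|\omega_1|^\alpha$ alone clearly does not control the isotropic symbol $(|\omega_1|^2+|\omega_2|^2)^{\alpha/2}$. My approach would be to exploit the compact support of $v$ in the bounded $\Omega$ through a fractional Poincar\'e-type estimate applied slice-wise: freeze one Fourier variable, apply the one-dimensional equivalence in the other to trade a single directional seminorm for the full $H^\alpha$ seminorm of each slice, and then integrate back in the frozen variable. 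The half-odd-integer restriction re-enters at this step through the Lions--Magenes obstruction applied to slices, which is precisely why the exceptional values $\alpha = n-1/2$ must be excluded; the remaining passage to the closures is again a routine density argument. The detailed computations are carried out in \cite{Ervin2007}.
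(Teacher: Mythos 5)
First, a point of reference: the paper does not prove this lemma at all --- it is imported verbatim from \cite{Ervin2007} --- so there is no in-paper argument to compare against, only the citation. Your one-dimensional half is the standard argument and is sound: the symbols $(\pm i\omega)^{\alpha}$ both have modulus $|\omega|^{\alpha}$, Plancherel identifies the three seminorms on $C_0^{\infty}(\Omega)$ after zero-extension, and the exclusion $\alpha\neq n-1/2$ enters exactly where you say it does, in identifying $H^{\alpha}_0(\Omega)$ with the functions in $H^{\alpha}(\mathbb{R})$ supported in $\overline{\Omega}$.

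The two-dimensional half, however, has a genuine gap at precisely the step you flag as the main obstacle, and the gap is not repairable by the route you propose. Freezing $\omega_2$ and applying the one-dimensional equivalence in $\omega_1$ only gives back control of $\int|\omega_1|^{2\alpha}|\hat v(\omega_1,\omega_2)|^2\,d\omega_1$ plus the $L^2$ norm of the slice; integrating in $\omega_2$ then returns exactly the directional seminorm you started from, and at no point does the weight $|\omega_2|^{2\alpha}$ required by the isotropic seminorm appear. In fact no argument can close this step, because the single-direction reverse inequality is false. Take $v(x,y)=f(x)g_k(y)$ with $f\in C_0^{\infty}(0,1)$ fixed and $g_k(y)=\phi(y)\sin(ky)$, $\phi\in C_0^{\infty}(0,1)$. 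Then
\begin{equation*}
|v|_{J^{\alpha}_{L,0\pi}(\mathbb{R}^2)}=|f|_{H^{\alpha}(\mathbb{R})}\,\|g_k\|_{L^2(\mathbb{R})},\qquad
|v|^2_{H^{\alpha}(\mathbb{R}^2)}\;\ge\;\int|\omega_2|^{2\alpha}|\hat f(\omega_1)|^2|\hat g_k(\omega_2)|^2\,d\omega
=\|f\|^2_{L^2}\,|g_k|^2_{H^{\alpha}(\mathbb{R})},
\end{equation*}
and the ratio of the second quantity to the square of the first grows like $k^{2\alpha}$, even though $v$ is supported in $\Omega=(0,1)^2$, so compact support does not rescue the estimate. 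What is provable --- and what the Fourier computation actually delivers --- is the equivalence of $|v|_{H^{\alpha}(\mathbb{R}^2)}$ with the combined seminorm $\bigl(|v|^2_{J^{\alpha}_{L,0\pi}}+|v|^2_{J^{\alpha}_{L,\pi/2}}\bigr)^{1/2}$, via $(|\omega_1|^2+|\omega_2|^2)^{\alpha}\le C\bigl(|\omega_1|^{2\alpha}+|\omega_2|^{2\alpha}\bigr)$ together with your easy direction. So your proof plan would need to be restated for the intersection of the directional spaces rather than for each directional space separately; as a single-direction equivalence in $\mathbb{R}^2$ the claim cannot be established by the slice-wise reduction or by any other means.
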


\begin{lemma}[\cite{Ervin2007}]
	Let $u\in J^\alpha_{L,\theta,0}(\Omega)$, $\theta\in\{0\pi,\pi/2,\pi,3\pi/2\}$, $\Omega \subset \mathbb{R}^2$. There are
	\begin{equation*}
	\|u\|_{L^2(\Omega)}\leq C|u|_{J^\alpha_{L,\theta,0}(\Omega)}
	\end{equation*}
	and for $ 0<\nu<\alpha$,
	\begin{equation*}
	|u|_{J^\nu_{L,\theta,0}(\Omega)}<C|u|_{J^\alpha_{L,\theta,0}(\Omega)}.
	\end{equation*}
\end{lemma}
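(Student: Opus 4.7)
The plan is to reduce both inequalities, via the norm equivalence in the preceding lemma, to statements on $H^\alpha_0(\Omega)$, and then to exploit the compact support of the zero-extension of $u$. By the density of $C^\infty_0(\Omega)$ in each of the spaces $J^\alpha_{L,\theta,0}(\Omega)$, I would first establish the bounds for smooth compactly-supported test functions and then pass to the limit.

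For the Poincaré-type bound (the first inequality), the plan is to use the inversion identity $u={}_{-\infty}I^\alpha_x({}_{-\infty}D^\alpha_x u)$, which is valid for $u\in C^\infty_0(\Omega)$ (with the analogous identities in $y$, or with the right-sided operators, for the other three values of $\theta$). Since $\Omega=(0,1)^2$ is bounded in the $x$-direction, Cauchy--Schwarz applied pointwise in $y$ to
\begin{equation*}
u(x,y)=\frac{1}{\Gamma(\alpha)}\int_{0}^{x}(x-\xi)^{\alpha-1}\,({}_{-\infty}D^\alpha_x u)(\xi,y)\,d\xi
\end{equation*}
followed by integration over $\Omega$ yields
\begin{equation*}
\|u\|_{L^2(\Omega)}\le C\,\|{}_{-\infty}D^\alpha_x u\|_{L^2(\mathbb{R}^2)}=C\,|u|_{J^\alpha_{L,0\pi,0}(\Omega)},
\end{equation*}
where $C$ depends only on $\alpha$ and the diameter of $\Omega$ (the relevant one-variable integrals converge because $\alpha>1/2$). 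The three remaining choices of $\theta$ are handled symmetrically.

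For the inequality between semi-norms at different orders, I would invoke Lemma~\ref{lemequnorm2D} to identify both $J^\nu_{L,\theta,0}(\Omega)$ and $J^\alpha_{L,\theta,0}(\Omega)$ with $H^\nu_0(\Omega)$ and $H^\alpha_0(\Omega)$, extend $u$ by zero to $\mathbb{R}^2$, and then use the Fourier characterization $|u|^2_{H^{s}(\mathbb{R}^2)}=\int|\omega|^{2s}|\hat u(\omega)|^2\,d\omega$. Splitting the integral defining $|u|^2_{H^\nu(\mathbb{R}^2)}$ at $|\omega|=1$ and bounding $|\omega|^{2\nu}\le 1$ on $\{|\omega|\le 1\}$ and $|\omega|^{2\nu}\le|\omega|^{2\alpha}$ on $\{|\omega|>1\}$ (using $\nu<\alpha$) gives $|u|^2_{H^\nu}\le\|u\|^2_{L^2}+|u|^2_{H^\alpha}$, and the Poincaré bound already established then absorbs the $L^2$-term into the $J^\alpha$ semi-norm.

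The principal obstacle is largely bookkeeping: one must ensure that the equivalences of Lemma~\ref{lemequnorm2D} apply with a uniform constant for each of the four directions $\theta\in\{0\pi,\pi/2,\pi,3\pi/2\}$ (which requires the excluded exponents $\alpha=n-1/2$), and one must verify that the zero-extension of $u$ genuinely lies in $H^\alpha(\mathbb{R}^2)$ rather than only in $H^\alpha(\Omega)$. Once these technical points are handled, the remaining work reduces to a one-dimensional convolution estimate and the elementary Fourier split above.
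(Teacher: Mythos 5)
This lemma is quoted verbatim from Ervin and Roop \cite{Ervin2007}; the paper itself offers no proof, so your proposal has to stand on its own. The second half of your argument (the Fourier split at $|\omega|=1$, combined with the norm equivalence of Lemma~\ref{lemequnorm2D} and absorption of the $L^2$ term by the Poincar\'e bound) is sound and is essentially the standard route. The genuine problem is in the first half. Your Cauchy--Schwarz estimate needs $\int_0^x(x-\xi)^{2\alpha-2}\,d\xi<\infty$, i.e.\ $\alpha>1/2$, and you say so explicitly. But the lemma is stated for all $\alpha>0$, and --- more to the point --- the regime in which this paper works is exactly $\alpha\in(0,1/2)$: the auxiliary variables are controlled in $H^{(2-\alpha)/2}$ and $H^{(2-\beta)/2}$ with $1<\alpha,\beta<2$, so every fractional order that actually occurs lies strictly below $1/2$. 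As written, your proof of the first inequality fails precisely where it would be needed.

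The repair is short, but it is a different estimate rather than bookkeeping. Since the kernel $t^{\alpha-1}/\Gamma(\alpha)$ belongs to $L^1(0,1)$ for every $\alpha>0$, Young's inequality for convolutions (equivalently, the weighted Cauchy--Schwarz splitting $(x-\xi)^{\alpha-1}=(x-\xi)^{(\alpha-1)/2}\cdot(x-\xi)^{(\alpha-1)/2}$ followed by Fubini) gives
\begin{equation*}
\bigl\|{}_{0}I^{\alpha}_{x}g\bigr\|_{L^2(0,1)}\le \frac{1}{\Gamma(\alpha+1)}\,\|g\|_{L^2(0,1)}\qquad\text{for all }\alpha>0,
\end{equation*}
and applying this in the $x$-variable for a.e.\ $y$ to $g={}_{-\infty}D^{\alpha}_{x}u$ yields the Poincar\'e inequality with no restriction on $\alpha$. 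With that substitution, together with the caveats you already flag (the excluded exponents $\alpha,\nu\ne n-1/2$ in the norm equivalence and the continuity of the zero-extension into $H^{\alpha}(\mathbb{R}^2)$), the proposal becomes a complete proof.
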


\begin{lemma}[\cite{Ervin2006}]
	\label{lemintinversede}	The left (right) Riemann-Liouville fractional derivative of order $\alpha$ acts as a left inverse of the left (right) Riemann-Liouville fractional integral of order $ \alpha $, i.e.,
	\begin{align*}
	&~_aD^\alpha_x~_aD^{-\alpha}_xu(x)=u(x),\\
	&~_xD^\alpha_b~_xD^{-\alpha}_bu(x)=u(x),\;\forall~ \alpha>0.
	\end{align*}
\end{lemma}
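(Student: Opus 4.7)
The plan is to reduce the identity to the semigroup property of Riemann--Liouville fractional integrals combined with the fundamental theorem of calculus. Fix $\alpha>0$ and let $n=\lceil\alpha\rceil$, so that $n-1\leq\alpha<n$. Comparing Definitions \ref{def1} and \ref{def2}, one sees that
\begin{equation*}
{}_aD^\alpha_x v(x)=\frac{d^n}{dx^n}\bigl({}_aI^{n-\alpha}_x v(x)\bigr),
\end{equation*}
so applying this to $v={}_aI^{-\alpha}_x u={}_aI^\alpha_x u$ gives
\begin{equation*}
{}_aD^\alpha_x\,{}_aI^\alpha_x u(x)=\frac{d^n}{dx^n}\bigl({}_aI^{n-\alpha}_x\,{}_aI^\alpha_x u(x)\bigr).
\end{equation*}
Thus it suffices to prove the semigroup identity ${}_aI^\mu_x\,{}_aI^\nu_x u={}_aI^{\mu+\nu}_x u$ for $\mu,\nu>0$ and then observe that $\frac{d^n}{dx^n}\,{}_aI^n_x u=u$ by iterating the fundamental theorem of calculus $n$ times.

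First I would establish the semigroup property by writing out the definitions, swapping the order of integration via Fubini's theorem, and invoking the beta-function identity
\begin{equation*}
\int_s^x (x-\xi)^{\mu-1}(\xi-s)^{\nu-1}\,d\xi=B(\mu,\nu)(x-s)^{\mu+\nu-1}=\frac{\Gamma(\mu)\Gamma(\nu)}{\Gamma(\mu+\nu)}(x-s)^{\mu+\nu-1}.
\end{equation*}
The $\Gamma$-factors cancel against the prefactors $1/\Gamma(\mu)$ and $1/\Gamma(\nu)$ coming from the two fractional integrals, leaving precisely $\frac{1}{\Gamma(\mu+\nu)}\int_a^x (x-s)^{\mu+\nu-1}u(s)\,ds={}_aI^{\mu+\nu}_x u(x)$. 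Setting $\mu=n-\alpha$ and $\nu=\alpha$ collapses the nested integral in the previous display to ${}_aI^n_x u$, and the $n$-fold FTC then produces $u(x)$, establishing the left-sided identity.

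For the right-sided identity I would argue in exactly the same way: the reflection $\xi\mapsto a+b-\xi$ interchanges ${}_xD^\alpha_b$ with ${}_aD^\alpha_x$ and ${}_xI^\alpha_b$ with ${}_aI^\alpha_x$, so the right-sided claim follows from the left-sided one (alternatively, one repeats the Fubini/beta computation with the direction of integration reversed and carries along the sign $(-1)^n$ from Definition \ref{def2}, which is absorbed because $\frac{d^n}{dx^n}\int_x^b u(\xi)\,d\xi^{(n)}=(-1)^n u$).

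The only genuine technical obstacle is justifying the use of Fubini in the semigroup step and the differentiation under the integral sign in the final $\frac{d^n}{dx^n}\,{}_aI^n_x$ step; both require mild regularity assumptions on $u$ (for instance $u\in L^1(a,b)$ suffices for the integral manipulations, while absolute continuity up to order $n-1$ of ${}_aI^n_x u$ — which is automatic — is enough for the FTC step). Once Fubini applies, every remaining manipulation is routine and the identity follows in closed form.
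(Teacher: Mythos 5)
The paper does not prove this lemma at all: it is imported verbatim from Ervin--Roop \cite{Ervin2006} (and is classical, appearing in Podlubny and in Kilbas--Srivastava--Trujillo), so there is no in-paper argument to compare against. Your proof is the standard one and is correct: writing $_aD^\alpha_x={}\frac{d^n}{dx^n}\,{}_aI^{n-\alpha}_x$, invoking the semigroup property $_aI^{\mu}_x\,{}_aI^{\nu}_x={}_aI^{\mu+\nu}_x$ (Fubini plus the beta integral), and finishing with the $n$-fold fundamental theorem of calculus, with the right-sided case obtained by reflection or by tracking the $(-1)^n$ factors, is exactly how the cited sources argue, and your remarks on the regularity needed for Fubini and for the FTC step are the right ones. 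The only nit is the index convention: with the paper's convention $n-1\leq\alpha<n$, your choice $n=\lceil\alpha\rceil$ fails for integer $\alpha$ (one should then take $n=\alpha$, where the identity reduces to the classical FTC since $_aI^0_x$ is the identity); for non-integer $\alpha$, which is the case of interest here ($1<\alpha<2$), everything you wrote goes through unchanged.
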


\begin{lemma}[\cite{Ervin2006}]\label{lemLRcosnorm}
	Let $ \alpha>0 $ and $ n $ be the smallest integer greater than $ \alpha $ $( n-1\leq \alpha<n)$. Then for a real valued function $u(x)$, there exists
	\begin{equation*}
	(~_{-\infty} D^\alpha_xu,~_x D^\alpha_\infty u)=\cos(\pi\alpha)\|~_{-\infty} D^\alpha_xu\|^{2}_{L^2(\mathbb{R})}.
	\end{equation*}
\end{lemma}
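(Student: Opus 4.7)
The approach is to pass to the Fourier side and exploit the fact that the left and right Riemann--Liouville derivatives have complex-conjugate Fourier symbols (up to a sign of $\omega$). Recall that, at least for sufficiently nice $u$ (say $u\in C_0^\infty(\mathbb{R})$ from which the identity extends by density to $H^\alpha_0$ via Lemma~\ref{lemequnorm2D}), one has
\begin{equation*}
\widehat{{}_{-\infty}D^\alpha_x u}(\omega)=(i\omega)^\alpha\,\hat u(\omega),\qquad
\widehat{{}_{x}D^\alpha_\infty u}(\omega)=(-i\omega)^\alpha\,\hat u(\omega),
\end{equation*}
where the powers are defined via the principal branch. I would start by quoting (or briefly justifying from Definition~\ref{def2} by writing the derivative as $D^n {}_{-\infty}I^{n-\alpha}_x$ and using the known Fourier symbol of the Riemann--Liouville integral) these two identities. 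This reduces the problem to a computation about complex-valued multipliers.

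Next I would apply Parseval's theorem to write
\begin{equation*}
({}_{-\infty}D^\alpha_x u,\,{}_{x}D^\alpha_\infty u)
=\int_{\mathbb{R}} (i\omega)^\alpha\,\overline{(-i\omega)^\alpha}\;|\hat u(\omega)|^2\,d\omega.
\end{equation*}
Splitting into $\omega>0$ and $\omega<0$ and using the principal branch $i^\alpha=e^{i\alpha\pi/2}$, $(-i)^\alpha=e^{-i\alpha\pi/2}$, a direct computation gives
\begin{equation*}
(i\omega)^\alpha\,\overline{(-i\omega)^\alpha}=|\omega|^{2\alpha}\,e^{i\alpha\pi\,\mathrm{sgn}(\omega)}.
\end{equation*}

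Finally I would use that $u$ is real-valued, so $|\hat u(\omega)|^2$ is an even function of $\omega$; therefore the odd imaginary part $|\omega|^{2\alpha}\sin(\alpha\pi\,\mathrm{sgn}(\omega))|\hat u(\omega)|^2$ integrates to zero, while the even real part leaves
\begin{equation*}
\cos(\alpha\pi)\int_{\mathbb{R}}|\omega|^{2\alpha}|\hat u(\omega)|^2\,d\omega
=\cos(\alpha\pi)\,\|{}_{-\infty}D^\alpha_x u\|^2_{L^2(\mathbb{R})},
\end{equation*}
where the last equality is again Parseval together with $|(i\omega)^\alpha|=|\omega|^\alpha$. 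The main delicate point is making the complex-power Fourier symbols rigorous for non-integer $\alpha$ (correct choice of branch and legitimacy of the symbol identity for $u$ that only decays in the Sobolev sense); once those are granted, the remainder is a short symbol calculation and a parity argument.
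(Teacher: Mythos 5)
The paper does not prove this lemma at all --- it is quoted verbatim from Ervin and Roop \cite{Ervin2006} --- but your argument is correct and is essentially the proof given in that reference: the symbols $(i\omega)^\alpha$ and $(-i\omega)^\alpha$, Parseval's theorem, the identity $(i\omega)^\alpha\overline{(-i\omega)^\alpha}=|\omega|^{2\alpha}e^{i\alpha\pi\,\mathrm{sgn}(\omega)}$, and the evenness of $|\hat u(\omega)|^2$ for real-valued $u$ killing the odd imaginary part. Your computation checks out, including the final Parseval step $\int_{\mathbb{R}}|\omega|^{2\alpha}|\hat u|^2\,d\omega=\|{}_{-\infty}D^\alpha_xu\|^2_{L^2(\mathbb{R})}$, so there is nothing to add.
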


\begin{lemma}[\cite{Li2009}]\label{lemdisjoint}
	For all $0<\alpha<1$, $\Omega=(a,b)\subset\mathbb{R}$, if $u\in~_0H^1(\Omega)$ and $v\in~_0H^{\frac{\alpha}{2}}(\Omega)$, then
	\begin{equation*}
	\left (~_aD^\alpha_xu,v\right )_{\Omega}=\left(~_aD^{\alpha/2}_xu,~_xD^{\alpha/2}_bv\right)_\Omega.
	\end{equation*}
\end{lemma}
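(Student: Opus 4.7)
The plan is to establish the identity first on a dense set of smooth functions and then pass to the limit using the equivalence of norms from Lemma \ref{lemequnorm2D}. The key algebraic input is the semigroup property of Riemann--Liouville operators, which will allow me to split $_aD^\alpha_x$ into two half-order derivatives and then transfer one of them onto $v$ via a fractional integration by parts.

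First, I would take $u,v \in {_0C^\infty}(\Omega)$ so that every fractional integral and derivative in sight is classical and every boundary term vanishes. On this dense class, I would verify the composition identity
\begin{equation*}
{_aD^\alpha_x} u = {_aD^{\alpha/2}_x}\bigl({_aD^{\alpha/2}_x} u\bigr),
\end{equation*}
which reduces, after writing each half derivative as $\frac{d}{dx}\,{_aI^{1-\alpha/2}_x}$, to the semigroup property ${_aI^{1-\alpha/2}_x}\,{_aI^{1-\alpha/2}_x} = {_aI^{2-\alpha}_x}$ of Riemann--Liouville integrals, which is a direct Fubini computation; Lemma \ref{lemintinversede} then cleans up the remaining ${_aD}\circ{_aI}$ cancellation.

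Second, on the same smooth class I would prove the half-order adjoint relation
\begin{equation*}
\bigl({_aD^{\alpha/2}_x} w,\, v\bigr)_\Omega = \bigl(w,\, {_xD^{\alpha/2}_b} v\bigr)_\Omega,
\end{equation*}
by representing ${_aD^{\alpha/2}_x}$ and ${_xD^{\alpha/2}_b}$ via $\frac{d}{dx}$ composed with an $(1-\alpha/2)$-order integral, integrating by parts (the boundary terms at $a$ and $b$ vanish because of the compact-support assumption), and swapping the order of integration with Fubini. Applied with $w = {_aD^{\alpha/2}_x}u$, this combined with Step 1 gives the desired equality on ${_0C^\infty}\times{_0C^\infty}$.

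Finally, I would extend by density. For $u\in{_0H^1}(\Omega)$ and $v\in{_0H^{\alpha/2}}(\Omega)$, Lemma \ref{lemequnorm2D} identifies ${_0H^{\alpha/2}}(\Omega)$ with both $J^{\alpha/2}_{L,0}(\Omega)$ and $J^{\alpha/2}_{R,0}(\Omega)$ with equivalent norms, and since $\alpha/2<1/2<1$ the inclusion ${_0H^1}(\Omega)\hookrightarrow{_0H^{\alpha/2}}(\Omega)$ holds. Consequently ${_aD^\alpha_x}u\in L^2(\Omega)$ (using that $u\in{_0H^1}$ allows one derivative to be absorbed), while ${_aD^{\alpha/2}_x}u$ and ${_xD^{\alpha/2}_b}v$ both lie in $L^2(\Omega)$, so both sides of the claimed identity are continuous bilinear forms on ${_0H^1}(\Omega)\times{_0H^{\alpha/2}}(\Omega)$. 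Since ${_0C^\infty}(\Omega)$ is dense in each of these spaces, the identity extends.

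The main obstacle I anticipate is the bookkeeping in Step 2: one has to ensure that when $\frac{d}{dx}$ is moved off of ${_aI^{1-\alpha/2}_x}w$ and onto $v$ via integration by parts, the endpoint contributions $[{_aI^{1-\alpha/2}_x}w \cdot {_xI^{1-\alpha/2}_b}v]_a^b$ really are zero under the ${_0C^\infty}$ assumption, and that the resulting double integral converges absolutely so Fubini applies — this is where the hypothesis that $v$ vanishes at the endpoints (encoded in the ${_0H^{\alpha/2}}$ class, after density) is essential.
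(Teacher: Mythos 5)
The paper does not actually prove this lemma: it is imported verbatim from \cite{Li2009} (see also Ervin and Roop \cite{Ervin2006}), so there is no in-paper argument to compare you against. Judged on its own, your outline is the standard proof of this adjoint/splitting identity and is essentially correct: the composition ${_aD^\alpha_x}u={_aD^{\alpha/2}_x}({_aD^{\alpha/2}_x}u)$ is legitimate on ${_0C^\infty}(\Omega)$ because such $u$ vanish identically near $a$, so $\tfrac{d}{dx}$ commutes with ${_aI^{1-\alpha/2}_x}$ without correction terms and the semigroup property of the integrals does the rest; the transfer of one half-derivative onto $v$ is the $L^2$-adjointness of ${_aI^{1-\alpha/2}_x}$ and ${_xI^{1-\alpha/2}_b}$ plus one integration by parts; and the density step works because ${_aD^\alpha_x}u={_aI^{1-\alpha}_x}u'$ gives $\|{_aD^\alpha_x}u\|_{L^2}\le C\|u\|_{H^1}$ while Lemma \ref{lemequnorm2D} (together with the Remark identifying ${_0H^{\alpha/2}}=H_0^{\alpha/2}$ for $\alpha/2<1/2$) bounds the two half-order seminorms, so both sides are continuous bilinear forms on ${_0H^1}(\Omega)\times{_0H^{\alpha/2}}(\Omega)$.

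The one place where your bookkeeping is off is the boundary term in Step 2. Integrating $\int_a^b\bigl(\tfrac{d}{dx}{_aI^{1-\alpha/2}_x}w\bigr)v\,dx$ by parts produces $\bigl[{_aI^{1-\alpha/2}_x}w\cdot v\bigr]_a^b$, not $\bigl[{_aI^{1-\alpha/2}_x}w\cdot{_xI^{1-\alpha/2}_b}v\bigr]_a^b$, and since ${_0C^\infty}(\Omega)$ only requires compact support in $(a,b]$, the contribution at $x=b$, namely ${_aI^{1-\alpha/2}_b}w(b)\,v(b)$, does not vanish termwise. It is nevertheless harmless: when you convert $-{_xI^{1-\alpha/2}_b}(v')$ into ${_xD^{\alpha/2}_b}v=-\tfrac{d}{dx}{_xI^{1-\alpha/2}_b}v$ you pick up exactly $v(b)\,(b-x)^{-\alpha/2}/\Gamma(1-\alpha/2)$, whose inner product with $w$ reproduces ${_aI^{1-\alpha/2}_b}w(b)\,v(b)$, and the two cancel. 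Alternatively, you can sidestep this entirely by taking $v\in C_0^\infty(\Omega)$, which is dense in ${_0H^{\alpha/2}}(\Omega)=H_0^{\alpha/2}(\Omega)$ precisely because $\alpha/2<1/2$; only $u$ must be drawn from ${_0C^\infty}(\Omega)$, since $C_0^\infty(\Omega)$ is not dense in ${_0H^1}(\Omega)$. With that repair the argument is complete.
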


%\begin{lemma}[\cite{Zhang2010}]\label{lemdisjoint}
%	Let $0<\alpha<1$, $ a $ and $ b $ be arbitrary finite or infinite real numbers. Assume $ u\in J^{2\alpha}_L(a,b)~ (J^{2\alpha}_R(a,b)) $, and the first derivative of $v$ is integrable in $ (a,b) $, $ u(a)=u(b)=0 $, $ v(a)=v(b)=0$. Then
%	\begin{equation*}
%		\begin{aligned}
%			(~_aD^{2\alpha}_xu,v)=(~_aD^{\alpha}_xu,~_xD^{\alpha}_bv),\\
%			(~_xD^{2\alpha}_bu,v)=(~_xD^{\alpha}_bu,~_aD^{\alpha}_xv).
%		\end{aligned}
%	\end{equation*}
%\end{lemma}

\begin{lemma}\label{lemmalast}
	Suppose $0<\alpha<1$, $\Omega=(a,b)$ be a bounded open subinterval of $\mathbb{R}$, $ u\in ~_0H^1(\Omega)$, $v\in~_0H^\alpha(\Omega)$, and ${\rm supp}(v)\subset I\subset\Omega$. Then there exists
	\begin{equation*}
	(~_aD^\alpha_xu,v)_I=(~_aD^{\alpha/2}_xu,~_xD^{\alpha/2}_bv)_\Omega.
	\end{equation*}
\end{lemma}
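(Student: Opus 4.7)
The plan is to reduce this statement to Lemma \ref{lemdisjoint} (the previous lemma, \texttt{lemdisjoint}) via two essentially trivial observations, so the work is almost entirely bookkeeping about supports and embeddings rather than new analysis.

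First I would exploit the support hypothesis. Since ${\rm supp}(v)\subset I\subset\Omega$, the function $v$ vanishes pointwise on $\Omega\setminus I$, so the $L^2$ inner product over $I$ agrees with the inner product over $\Omega$:
\begin{equation*}
(~_aD^\alpha_xu,v)_I=\int_I(~_aD^\alpha_xu)(x)\,v(x)\,dx=\int_\Omega(~_aD^\alpha_xu)(x)\,v(x)\,dx=(~_aD^\alpha_xu,v)_\Omega.
\end{equation*}
This step requires only that $~_aD^\alpha_xu\in L^2(\Omega)$, which is guaranteed by $u\in{}_0H^1(\Omega)$ and $\alpha<1$ (so that the fractional derivative has order strictly less than one and stays in $L^2$ by standard mapping properties of Riemann--Liouville operators on bounded intervals).

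Next I would check the regularity hypothesis required to invoke Lemma \ref{lemdisjoint}. That lemma needs the test function to live in $~_0H^{\alpha/2}(\Omega)$, whereas we are only given $v\in~_0H^\alpha(\Omega)$. However, since $0<\alpha/2<\alpha<1$, the standard embedding of fractional Sobolev spaces on the bounded interval $\Omega$ gives $~_0H^\alpha(\Omega)\hookrightarrow{}_0H^{\alpha/2}(\Omega)$, so the regularity hypothesis is automatic.

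Finally I would apply Lemma \ref{lemdisjoint} directly to $u\in~_0H^1(\Omega)$ and $v\in~_0H^{\alpha/2}(\Omega)$ to obtain
\begin{equation*}
(~_aD^\alpha_xu,v)_\Omega=(~_aD^{\alpha/2}_xu,~_xD^{\alpha/2}_bv)_\Omega,
\end{equation*}
and combine this with the first step to conclude the desired identity. There is no genuine obstacle here; the only point worth flagging is the subtle shift of the domain of integration from $I$ on the left to all of $\Omega$ on the right, which is precisely what the support condition on $v$ is tailored to resolve. The lemma is essentially a localized version of Lemma \ref{lemdisjoint} that will be convenient later for testing against discontinuous Galerkin basis functions supported on single mesh cells.
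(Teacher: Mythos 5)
Your proposal is correct and follows exactly the paper's own argument: use the support condition ${\rm supp}(v)\subset I$ to replace the inner product over $I$ by the one over $\Omega$, then apply Lemma \ref{lemdisjoint}. The extra remarks on the embedding $~_0H^\alpha(\Omega)\hookrightarrow{}_0H^{\alpha/2}(\Omega)$ and on $~_aD^\alpha_xu\in L^2(\Omega)$ are sensible bookkeeping that the paper leaves implicit.
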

\begin{proof}
	According to ${\rm supp}(v)\subset I\subset\Omega$ and Lemma \ref{lemdisjoint}, we have
	\begin{equation*}
	(~_aD^\alpha_xu,v)_I=(~_aD^\alpha_xu,v)_\Omega=(~_aD^{\alpha/2}_xu,~_xD^{\alpha/2}_bv)_\Omega.
	\end{equation*}
\end{proof}
\begin{lemma}[\cite{Kilbas2006}]\label{lemCaputo0}
	Suppose $\alpha>0$ and $n-1\leq\alpha<n$, $n\in \mathbb{N}$. Let $u\in C^n[a,b]$. Then
	\begin{equation*} % \mathbb{N}^*
	~_aI^{n-\alpha}_xD^nu\in C[a,b],\quad ~_xI^{n-\alpha}_bD^nu\in C[a,b],
	\end{equation*}
	where $D=d/dx$. Furthermore if $\alpha\notin \mathbb{N}$, one has
	\begin{equation*}
	~_aI^{n-\alpha}_xD^nu(a)=~_aI^{n-\alpha}_xD^nu(b)=0.
	\end{equation*}
\end{lemma}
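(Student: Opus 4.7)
The plan is to reduce both the continuity and boundary-value statements to a single change of variables that absorbs the weakly singular kernel. Substituting $\xi = a+(x-a)t$ in
\begin{equation*}
{}_aI^{n-\alpha}_x D^n u(x) = \frac{1}{\Gamma(n-\alpha)}\int_a^x (x-\xi)^{n-\alpha-1} u^{(n)}(\xi)\,d\xi
\end{equation*}
produces
\begin{equation*}
{}_aI^{n-\alpha}_x D^n u(x) = \frac{(x-a)^{n-\alpha}}{\Gamma(n-\alpha)}\int_0^1 (1-t)^{n-\alpha-1} u^{(n)}\bigl(a+(x-a)t\bigr)\,dt,
\end{equation*}
in which the factor $(1-t)^{n-\alpha-1}$ is integrable on $[0,1]$ (since $n-\alpha\in(0,1]$) and the remaining $x$-dependence has been moved into the prefactor $(x-a)^{n-\alpha}$ and the smooth evaluation $u^{(n)}(a+(x-a)t)$.

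From this representation, continuity follows by dominated convergence: because $u\in C^n[a,b]$, $u^{(n)}$ is bounded by some $M=\|u^{(n)}\|_{C[a,b]}$, so the integrand is dominated uniformly in $x$ by the integrable function $M(1-t)^{n-\alpha-1}$. Hence the integral is a continuous function of $x\in[a,b]$, and multiplying by the continuous prefactor $(x-a)^{n-\alpha}$ yields $_aI^{n-\alpha}_x D^n u\in C[a,b]$. The mirror substitution $\xi = b-(b-x)t$ gives the analogous identity
\begin{equation*}
{}_xI^{n-\alpha}_b D^n u(x) = \frac{(b-x)^{n-\alpha}}{\Gamma(n-\alpha)}\int_0^1 (1-t)^{n-\alpha-1} u^{(n)}\bigl(b-(b-x)t\bigr)\,dt,
\end{equation*}
to which the same dominated-convergence argument applies, showing that $_xI^{n-\alpha}_b D^n u\in C[a,b]$ as well.

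For the boundary-value claim, assume $\alpha\notin\mathbb{N}$, so that $n-\alpha\in(0,1)$ strictly. In both representations, the inner integral is bounded in modulus by $M/(n-\alpha)$ uniformly in $x$, while the prefactor $(x-a)^{n-\alpha}$ vanishes at $x=a$ (respectively $(b-x)^{n-\alpha}$ vanishes at $x=b$). Therefore $_aI^{n-\alpha}_x D^n u(a)=0$ and, symmetrically, the right-sided counterpart vanishes at $x=b$. The only technical obstacle in the whole argument is handling the integrable singularity of the kernel, which the change of variables removes in one stroke; after that, everything reduces to a textbook application of dominated convergence and the strict inequality $n-\alpha>0$.
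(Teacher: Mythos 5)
Your argument is correct and complete. The paper itself offers no proof of this lemma (it is quoted from Kilbas--Srivastava--Trujillo), so there is nothing to compare against; your change of variables $\xi=a+(x-a)t$, which factors the integral as $(x-a)^{n-\alpha}$ times a uniformly dominated integral of a continuous integrand, is the standard and cleanest route, and the dominated-convergence step and the bound $M/(n-\alpha)$ on the inner integral are both justified since $0<n-\alpha\le 1$ (strictly less than $1$ in the boundary-value part). One point worth flagging: the lemma as printed asserts ${}_aI^{n-\alpha}_xD^nu(a)={}_aI^{n-\alpha}_xD^nu(b)=0$, i.e.\ that the \emph{left-sided} integral vanishes at both endpoints, which is false in general (take $n=1$, $\alpha=1/2$, $u(x)=x$ on $[0,1]$: then ${}_0I^{1/2}_xDu(1)=2/\sqrt{\pi}\ne 0$). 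What you prove --- the left-sided integral vanishes at $a$ and the right-sided one at $b$ --- is the correct statement from the cited reference and is exactly what the paper uses in Section~3 to impose $q^x_L(0,y,t)=q^x_R(1,y,t)=0$; the printed version is evidently a typo, and your silent correction is the right call.
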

\section{Central LDG scheme for fractional diffusion equation} %Construction of the central LDG scheme
\label{}
Now, we design the spatial semi-discrete scheme by using the central LDG method to discretize the space fractional derivatives.

Firstly, we define the inner product on the simplex $I$ and  over the surface of $I$  for two continuous functions $f, g$,
\begin{equation*}
(f,g)_I=\int_{I}f(\mathbf{x})g(\mathbf{x})d\mathbf{x},\ (f,g)_{\partial
	I}=\int_{\partial I}f(\mathbf{x})g(\mathbf{x})ds.
\end{equation*}
Following the standard approach for the development of the LDG method for equations with higher order derivatives in \cite{Qiu2015}, we introduce the auxiliary variable $\mathbf{\bar{q}}=(q^{x}_L+q^x_R,q^y_L+q^y_R)$, and assume $u\in C^1(\Omega)$, $~_0D^{\alpha-2}_xu$, $~_xD^{\alpha-2}_1u$, $~_0D^{\beta-2}_yu$, and $~_yD^{\beta-2}_1u\in H^1(\Omega)$. Moreover, according to Lemmas \ref{lemintinversede} and \ref{lemCaputo0}, Eq.  (\ref{equation2D}) can be written as
\begin{equation}\label{eqldlsty1}
\left\{
\begin{aligned}
&\frac{\partial u}{\partial t}=\nabla\cdot \mathbf{\bar{q}}+f~~~~~(x,y,t)\in \Omega\times[0,T],\\
&~_0D^{2-\alpha}_xq^x_L=\frac{\partial}{\partial x}u~~~~(x,y,t)\in \Omega\times[0,T],\\
&~_xD^{2-\alpha}_1q^x_R=\frac{\partial}{\partial x}u~~~~(x,y,t)\in \Omega\times[0,T],\\
&~_0D^{2-\beta}_yq^y_L=\frac{\partial}{\partial y}u~~~~~(x,y,t)\in \Omega\times[0,T],\\
&~_yD^{2-\beta}_1q^y_R=\frac{\partial}{\partial y}u~~~~~(x,y,t)\in \Omega\times[0,T],\\
&u(x,y,0)=g(x,y)~~~(x,y)\in \Omega,\\
&u(x,y,t)=0~~~~~~~~~~(x,y,t)\in (\mathbb{R}^2\backslash\Omega)\times[0,T],\\
&q^x_L(0,y,t)=q^x_R(1,y,t)=q^y_L(x,0,t)=q^y_R(x,1,t)=0,\ t\in[0,T].
\end{aligned}
\right.
\end{equation}
Further define two operators $\mathbf{D}^{\alpha,\beta}_L\mathbf{q}=(~_0D^\alpha_xq^x,~_0D^\beta_yq^y)$ and $\mathbf{D}^{\alpha,\beta}_R\mathbf{q}=(~_xD^\alpha_1q^x,~_yD^\beta_1q^y)$, where $\mathbf{q}=(q^x,q^y)$. Denote $ \mathbf{q}_L=(q^x_L,q^y_L) $ and $ \mathbf{q}_R=(q^x_R, q^y_R) $.  Then Eq. \eqref{eqldlsty1} can be recast as
\begin{equation*}
\left \{
\begin{aligned}
&\frac{\partial u}{\partial t}=\nabla\cdot (\mathbf{q}_L+\mathbf{q}_R)+f~~~(x,y,t)\in \Omega\times[0,T],\\
&\mathbf{D}^{2-\alpha,2-\beta}_L\mathbf{q}_L=\nabla u~~~~~~~~~~~(x,y,t)\in \Omega\times[0,T],\\
&\mathbf{D}^{2-\alpha,2-\beta}_R\mathbf{q}_R=\nabla u~~~~~~~~~~~(x,y,t)\in \Omega\times[0,T],\\
&u(x,y,0)=g(x,y)~~~~~~~~~~~(x,y)\in \Omega,\\
&u(x,y,t)=0~~~~~~~~~~~~~~~~~~(x,y,t)\in (\mathbb{R}^2\backslash\Omega)\times[0,T],\\
&q^x_L(0,y,t)=q^x_R(1,y,t)=q^y_L(x,0,t)=q^y_R(x,1,t)=0\ ~~~~t\in[0,T].
\end{aligned}
\right.
\end{equation*}
%Then we get the variational formulation: for any $v\in C^\infty_0(\Omega)$ and $\boldsymbol{\phi}_L,\boldsymbol{\phi}_R\in(C^\infty_0(\Omega))^2$
% \begin{equation*}
% \left \{
% \begin{aligned}
% &\left (\frac{\partial u}{\partial t},v\right )_{\Omega}=-( (\mathbf{q}_L+\mathbf{q}_R),\nabla v)+((\mathbf{q}_L+\mathbf{q}_R),\mathbf{n}\cdot v)+(f,v)~~~(x,y,t)\in \Omega\times[0,T],\\
%&(\mathbf{D}^{(2-\alpha)/2,(2-\beta)/2}_L\mathbf{q}_{L},\mathbf{D}^{(2-\alpha)/2,(2-\beta)/2}_R\boldsymbol{\phi}_{L})_{\Omega}=-(u,\nabla\boldsymbol{\phi}_{L})_{\bar{I}_{i,j}}+(u,\mathbf{n}\cdot\boldsymbol{\phi}_{L})_{\partial \bar{I}_{i,j}},\\
%&(\mathbf{D}^{(2-\alpha)/2,(2-\beta)/2}_R\mathbf{q}_{R},\mathbf{D}^{(2-\alpha)/2,(2-\beta)/2}_L\boldsymbol{\phi}_{R})_{\bar{I}_{i,j}}=-(u,\nabla\boldsymbol{\phi}_{R})_{\bar{I}_{i,j}}+(u,\mathbf{n}\cdot\boldsymbol{\phi}_{R})_{\partial \bar{I}_{i,j}},\\
% &u(x,y,0)=g(x,y)~~~(x,y)\in \Omega,\\
% &u(x,y,t)=0~~~(x,y,t)\in \partial\Omega\times[0,T].\\
% \end{aligned}
% \right.
% \end{equation*} %\mathbb{N}^*
Next, we introduce some notations to get the spatial semi-discrete scheme. Let the mesh size $h=1/N$, $N\in\mathbb{N}$, $\{x_j\}$, $\{y_j\}$ be a partition of $[0,1]$ and grid points $x_j=y_j=jh$\, $(j=0,1,\cdots,N)$. Denote $x_{j+\frac{1}{2}}=\frac{1}{2}(x_{j+1}+x_j)$ and  $y_{j+\frac{1}{2}}=\frac{1}{2}(y_{j+1}+y_j)$; $I_{i,j}=(x_{i},x_{i+1})\times(y_{j},y_{j+1})$ with  $(i,j=0,1,\cdots,N-1)$ and $\bar{I}_{i,j}=(x_{i-\frac{1}{2}},x_{i+\frac{1}{2}})\times(y_{j-\frac{1}{2}},y_{j+\frac{1}{2}})$ with $(i,j=0,1,\cdots,N)$ are two sets of the overlapping cells and we set $x_{-\frac{1}{2}}=x_0$, $y_{-\frac{1}{2}}=y_0$, $x_{N+\frac{1}{2}}=x_N$, and $y_{N+\frac{1}{2}}=y_N$ specially. Denote $V_h$ as the set of piecewise polynomials of degree $k$ over the subintervals $\{I_{i,j}\}$ with no continuity assumed across the subinterval boundaries, and the set of the ones over the subintervals $\{\bar{I}_{i,j}\}$ is denoted as $W_h$.
%Likewise, $W_h$ is the set of piecewise polynomials of degree $k$ over the subintervals $\{\bar{I}_{i,j}\}$ with no continuity assumed across the subinterval boundaries.

To get the central LDG scheme by using  the above two discrete function spaces $V_h$ and $W_h$, let $u\in H^{1}(0,T,H^1_0(\Omega)) $ and $\mathbf{q}_L, \mathbf{q}_R\in L^2(0,T,H^{(2-\alpha)/2}(\Omega))\times L^2(0,T,H^{(2-\beta)/2}(\Omega))$. Furthermore, defining $u_{1,h}$, $\mathbf{q}_{L,1,h}$, $\mathbf{q}_{R,1,h}$ as the approximations of  $u$, $\mathbf{q}_{L}$, and $\mathbf{q}_{R}$ on mesh $\{I_{i,j}\}$, respectively, and $u_{2,h}$, $\mathbf{q}_{L,2,h}$, $\mathbf{q}_{R,2,h}$ as the approximations of  $u$, $\mathbf{q}_{L}$, and $\mathbf{q}_{R}$ on mesh $\{\bar{I}_{i,j}\}$, respectively, then we get the central LDG scheme which involves two pieces of approximate solutions defined on overlapping cells, i.e., find $u_{1,h}\in H^1(0,T,V_h)$, $u_{2,h}\in H^1(0,T,W_h)$, $\mathbf{q}_{L,1,h}=(q^x_{L,1,h}, q^y_{L,1,h})\in (L^2(0,T,V_h))^2$,  $\mathbf{q}_{L,2,h}=(q^x_{L,2,h}, q^y_{L,2,h})\in (L^2(0,T,W_h))^2$, and $\mathbf{q}_{R,1,h}=(q^x_{R,1,h}, q^y_{R,1,h})\in (L^2(0,T,V_h))^2$, $\mathbf{q}_{R,2,h}=(q^x_{R,2,h}, q^y_{R,2,h})\in (L^2(0,T,W_h))^2$  satisfying
\begin{equation}\label{equationdis1}
\left\{
\begin{aligned}
&\left (\frac{\partial u_{1,h}}{\partial t},v_1\right )_{I_{i,j}}=\frac{1}{\tau_{max}}(u_{2,h}-u_{1,h},v_1)_{I_{i,j}}-(\mathbf{q}_{L,2,h}+\mathbf{q}_{R,2,h},\nabla v_1)_{I_{i,j}}\\
&\qquad\qquad\qquad\qquad\qquad\qquad+(\mathbf{n}\cdot(\mathbf{q}_{L,2,h}+\mathbf{q}_{R,2,h}),v_1)_{\partial I_{i,j}}+(f,v_1)_{I_{i,j}},\\
&\left (\mathbf{D}^{(2-\alpha)/2,(2-\beta)/2}_L\mathbf{q}_{L,2,h},\mathbf{D}^{(2-\alpha)/2,(2-\beta)/2}_R\boldsymbol{\phi}^{i,j}_{L,2}\right )_{\Omega}\\
&\qquad\qquad\qquad\qquad\qquad\qquad=-(u_{1,h},\nabla\boldsymbol{\phi}^{i,j}_{L,2})_{\bar{I}_{i,j}}+(u_{1,h},\mathbf{n}\cdot\boldsymbol{\phi}^{i,j}_{L,2})_{\partial \bar{I}_{i,j}},\\
&\left (\mathbf{D}^{(2-\alpha)/2,(2-\beta)/2}_R\mathbf{q}_{R,2,h},\mathbf{D}^{(2-\alpha)/2,(2-\beta)/2}_L\boldsymbol{\phi}^{i,j}_{R,2}\right )_{\Omega}\\
&\qquad\qquad\qquad\qquad\qquad\qquad=-(u_{1,h},\nabla\boldsymbol{\phi}^{i,j}_{R,2})_{\bar{I}_{i,j}}+(u_{1,h},\mathbf{n}\cdot\boldsymbol{\phi}^{i,j}_{R,2})_{\partial \bar{I}_{i,j}},\\
&u_{1,h}(x,y,0)=g(x,y)\quad\quad\quad\quad\quad\quad\qquad\quad~~(x,y)\in \Omega,\\
&u_{1,h}(x,y,t)=0\quad\quad\quad\quad\quad\quad\quad\qquad\qquad\quad(x,y,t)\in (\mathbb{R}^2\backslash\Omega)\times[0,T],\\
&q^x_{L,2,h}(0,y,t)=q^x_{R,2,h}(1,y,t)=0 \quad\quad\quad\quad\quad t\in[0,T],\\
&q^y_{L,2,h}(x,0,t)=q^y_{R,2,h}(x,1,t)=0\quad\quad\qquad\quad t\in[0,T],
\end{aligned}
\right.
\end{equation}
and
\begin{equation}\label{equationdis2}
\left\{
\begin{aligned}
&\left (\frac{\partial u_{2,h}}{\partial t},v_2\right )_{\bar{I}_{i,j}}=\frac{1}{\tau_{max}}(u_{1,h}-u_{2,h},v_2)_{\bar{I}_{i,j}}-(\mathbf{q}_{L,1,h}+\mathbf{q}_{R,1,h},\nabla v_2)_{\bar{I}_{i,j}}\\
&\qquad\qquad\qquad\qquad\qquad\qquad+(\mathbf{n}\cdot(\mathbf{q}_{L,1,h}+\mathbf{q}_{R,1,h}),v_2)_{\partial \bar{I}_{i,j}}+(f,v_2)_{\bar{I}_{i,j}},\\
&\left (\mathbf{D}^{(2-\alpha)/2,(2-\beta)/2}_L\mathbf{q}_{L,1,h},\mathbf{D}^{(2-\alpha)/2,(2-\beta)/2}_R\boldsymbol{\phi}^{i,j}_{L,1}\right )_{\Omega}\\
&\qquad\qquad\qquad\qquad\qquad\qquad=-(u_{2,h},\nabla\boldsymbol{\phi}^{i,j}_{L,1})_{I_{i,j}}+(u_{2,h},\mathbf{n}\cdot\boldsymbol{\phi}^{i,j}_{L,1})_{\partial I_{i,j}},\\
&\left (\mathbf{D}^{(2-\alpha)/2,(2-\beta)/2}_R\mathbf{q}_{R,1,h},\mathbf{D}^{(2-\alpha)/2,(2-\beta)/2}_L\boldsymbol{\phi}^{i,j}_{R,1}\right )_{\Omega}\\
&\qquad\qquad\qquad\qquad\qquad\qquad=-(u_{2,h},\nabla\boldsymbol{\phi}^{i,j}_{R,1})_{I_{i,j}}+(u_{2,h},\mathbf{n}\cdot\boldsymbol{\phi}^{i,j}_{R,1})_{\partial I_{i,j}},\\
&u_{2,h}(x,y,0)=g(x,y)\quad\quad\quad\quad\quad\quad\qquad\qquad(x,y)\in \Omega,\\
&u_{2,h}(x,y,t)=0\quad\quad\quad\quad\quad\quad\quad\quad\qquad\qquad~(x,y,t)\in (\mathbb{R}^2\backslash\Omega)\times[0,T],\\
&q^x_{L,1,h}(0,y,t)=q^x_{R,1,h}(1,y,t)=0 \quad\quad\qquad\qquad t\in[0,T],\\
&q^y_{L,1,h}(x,0,t)=q^y_{R,1,h}(x,1,t)=0\quad\quad\qquad\qquad t\in[0,T],
\end{aligned}
\right.
\end{equation}
for any function $v_1\in V_h$, $v_2\in W_h$, $\boldsymbol{\phi}^{i,j}_{L,1}\in(V_h)^2$, $\boldsymbol{\phi}^{i,j}_{L,2}\in(W_h)^2$, $\boldsymbol{\phi}^{i,j}_{R,1}\in(V_h)^2$, $\boldsymbol{\phi}^{i,j}_{R,2}\in(W_h)^2$ with ${\rm supp}(\boldsymbol{\phi}^{i,j}_{L,1})$ and ${\rm supp}(\boldsymbol{\phi}^{i,j}_{R,1})\subset I_{i,j}$, and $ {\rm supp}(\boldsymbol{\phi}^{i,j}_{L,2})$ and ${\rm supp}(\boldsymbol{\phi}^{i,j}_{R,2})\subset \bar{I}_{i,j}$. Here, $\mathbf{n}$ stands for the unit outer normal of the corresponding cell, and $\tau_{max}$ is an upper bound for the time step size due to the CFL restriction and Lemma \ref{lemmalast}. For the simplicity of the theoretical analysis, we denote
\begin{equation} % is used
\begin{aligned}
&B_1(u_{1,h},u_{2,h},\mathbf{q}_{L,2,h},\mathbf{q}_{R,2,h},v_1,\boldsymbol{\phi}_{L,2},\boldsymbol{\phi}_{R,2})\\
=&\sum_{i=0,j=0}^{i=N-1,j=N-1}\int_{0}^{T}\left(\frac{\partial u_{1,h}}{\partial t},v_1\right)_{I_{i,j}}+\frac{1}{\tau_{max}}(u_{1,h}-u_{2,h},v_1)_{I_{i,j}}\\
&\qquad\qquad\qquad\quad+(\mathbf{q}_{L,2,h}+\mathbf{q}_{R,2,h},\nabla v_1)_{I_{i,j}}
-(\mathbf{n}\cdot(\mathbf{q}_{L,2,h}+\mathbf{q}_{R,2,h}),v_1)_{\partial I_{i,j}}dt\\
&+\sum_{i=0,j=0}^{i=N,j=N}\int_{0}^{T}\left (\mathbf{D}^{(2-\alpha)/2,(2-\beta)/2}_L\mathbf{q}_{L,2,h},\mathbf{D}^{(2-\alpha)/2,(2-\beta)/2}_R\boldsymbol{\phi}^{i,j}_{L,2}\right )_{\Omega}\\
&\qquad\qquad\qquad+(u_{1,h},\nabla\boldsymbol{\phi}^{i,j}_{L,2})_{\bar{I}_{i,j}}-(u_{1,h},\mathbf{n}\cdot\boldsymbol{\phi}^{i,j}_{L,2})_{\partial \bar{I}_{i,j}}\\
&\qquad\qquad\qquad+\left (\mathbf{D}^{(2-\alpha)/2,(2-\beta)/2}_R\mathbf{q}_{R,2,h},\mathbf{D}^{(2-\alpha)/2,(2-\beta)/2}_L\boldsymbol{\phi}^{i,j}_{R,2}\right )_{\Omega}\\&\qquad\qquad\qquad+(u_{1,h},\nabla\boldsymbol{\phi}^{i,j}_{R,2})_{\bar{I}_{i,j}}-(u_{1,h},\mathbf{n}\cdot\boldsymbol{\phi}^{i,j}_{R,2})_{\partial \bar{I}_{i,j}}dt\\
\end{aligned}
\end{equation}
and
\begin{equation}
\begin{aligned}
&B_2(u_{2,h},u_{1,h},\mathbf{q}_{L,1,h},\mathbf{q}_{R,1,h},v_2,\boldsymbol{\phi}_{L,1},\boldsymbol{\phi}_{R,1})\\
=&\sum_{i=0,j=0}^{i=N,j=N}\int_{0}^{T}\left(\frac{\partial u_{2,h}}{\partial t},v_2\right)_{\bar{I}_{i,j}}+\frac{1}{\tau_{max}}(u_{2,h}-u_{1,h},v_2)_{\bar{I}_{i,j}}\\
&\qquad\qquad\quad+(\mathbf{q}_{L,1,h}+\mathbf{q}_{R,1,h},\nabla v_2)_{\bar{I}_{i,j}}
-(\mathbf{n}\cdot(\mathbf{q}_{L,1,h}+\mathbf{q}_{R,1,h}),v_2)_{\partial \bar{I}_{i,j}}dt\\
&+\sum_{i=0,j=0}^{i=N-1,j=N-1}\int_{0}^{T}\left (\mathbf{D}^{(2-\alpha)/2,(2-\beta)/2}_L\mathbf{q}_{L,1,h},\mathbf{D}^{(2-\alpha)/2,(2-\beta)/2}_R\boldsymbol{\phi}^{i,j}_{L,1}\right )_{\Omega}\\
&\qquad\qquad\qquad\qquad+(u_{2,h},\nabla\boldsymbol{\phi}^{i,j}_{L,1})_{I_{i,j}}-(u_{2,h},\mathbf{n}\cdot\boldsymbol{\phi}^{i,j}_{L,1})_{\partial I_{i,j}}\\
&\qquad\qquad\qquad\qquad+\left (\mathbf{D}^{(2-\alpha)/2,(2-\beta)/2}_R\mathbf{q}_{R,1,h},\mathbf{D}^{(2-\alpha)/2,(2-\beta)/2}_L\boldsymbol{\phi}^{i,j}_{R,1}\right )_{\Omega}\\
&\qquad\qquad\qquad\qquad+(u_{2,h},\nabla\boldsymbol{\phi}^{i,j}_{R,1})_{I_{i,j}}-(u_{2,h},\mathbf{n}\cdot\boldsymbol{\phi}^{i,j}_{R,1})_{\partial I_{i,j}} dt,
\end{aligned}
\end{equation}
where $\boldsymbol{\phi}_{L,1}=\{\boldsymbol{\phi}^{i,j}_{L,1}\}^{i=N-1,j=N-1}_{i=0,j=0}$,  $\boldsymbol{\phi}_{R,1}=\{\boldsymbol{\phi}^{i,j}_{R,1}\}^{i=N-1,j=N-1}_{i=0,j=0}$,  $\boldsymbol{\phi}_{L,2}=\{\boldsymbol{\phi}^{i,j}_{L,2}\}^{i=N,j=N}_{i=0,j=0}$, and  $\boldsymbol{\phi}_{R,2}=\{\boldsymbol{\phi}^{i,j}_{R,2}\}^{i=N,j=N}_{i=0,j=0}$. Therefore, the schemes \eqref{equationdis1} and \eqref{equationdis2} can be transformed as: find $u_{1,h}\in H^1(0,T,V_h)$, $u_{2,h}\in H^1(0,T,W_h)$, $\mathbf{q}_{L,1,h}=(q^x_{L,1,h}, q^y_{L,1,h})\in (L^2(0,T,V_h))^2$,  $\mathbf{q}_{L,2,h}=(q^x_{L,2,h}, q^y_{L,2,h})\in (L^2(0,T,W_h))^2$, $\mathbf{q}_{R,1,h}=(q^x_{R,1,h}, q^y_{R,1,h})\in (L^2(0,T,V_h))^2$, and $\mathbf{q}_{R,2,h}=(q^x_{R,2,h}, q^y_{R,2,h})\in (L^2(0,T,W_h))^2$ satisfying
\begin{equation}\label{equBin1sty}
B_1(u_{1,h},u_{2,h},\mathbf{q}_{L,2,h},\mathbf{q}_{R,2,h},v_1,\boldsymbol{\phi}_{L,2},\boldsymbol{\phi}_{R,2})=\int_{0}^{T}(f,v_1)dt
\end{equation}
and
\begin{equation}\label{equBin2sty}
B_2(u_{2,h},u_{1,h},\mathbf{q}_{L,1,h},\mathbf{q}_{R,1,h},v_2,\boldsymbol{\phi}_{L,1},\boldsymbol{\phi}_{R,1})=\int_{0}^{T}(f,v_2)dt
\end{equation}
for any function $v_1\in V_h$, $v_2\in V_h$, $\boldsymbol{\phi}^{i,j}_{L,1}\in(V_h)^2$, $\boldsymbol{\phi}^{i,j}_{L,2}\in(W_h)^2$, $\boldsymbol{\phi}^{i,j}_{R,1}\in(V_h)^2$, $\boldsymbol{\phi}^{i,j}_{R,2}\in(W_h)^2$ with ${\rm supp}(\boldsymbol{\phi}^{i,j}_{L,1})$ and ${\rm supp}( \boldsymbol{\phi}^{i,j}_{R,1})\subset I_{i,j}$, and ${\rm supp}( \boldsymbol{\phi}^{i,j}_{L,2})$ and ${\rm supp}( \boldsymbol{\phi}^{i,j}_{R,2})\subset \bar{I}_{i,j}$.
\section{Stability analysis and error estimates}
Let $(\tilde{u}_{1,h},\tilde{\mathbf{q}}_{L,1,h},\tilde{\mathbf{q}}_{R,1,h})$ be the approximation of the solution $(u_{1,h},\mathbf{q}_{L,1,h},\mathbf{q}_{R,1,h})$ in $H^1(0,T,V_h)\times (L^2(0,T,V_h))^2\times (L^2(0,T,V_h))^2$, and $(\tilde{u}_{2,h},\tilde{\mathbf{q}}_{L,2,h},\tilde{\mathbf{q}}_{R,2,h})$ be the approximation of $(u_{2,h},\mathbf{q}_{L,2,h},\mathbf{q}_{R,2,h})$ in $H^1(0,T,W_h) \times (L^2(0,T,W_h))^2\times (L^2(0,T,W_h))^2$.
%Let $(\tilde{u}_{1,h},\tilde{\mathbf{q}}_{L,1,h},\tilde{\mathbf{q}}_{R,1,h})\in H^1(0,T,V_h)\times (L^2(0,T,V_h))^2\times (L^2(0,T,V_h))^2$ and $(\tilde{u}_{2,h},\tilde{\mathbf{q}}_{L,2,h},\tilde{\mathbf{q}}_{R,2,h})\in H^1(0,T,W_h)\times (L^2(0,T,W_h))^2\times (L^2(0,T,W_h))^2$ be the approximations of the solutions $(u_{1,h},\mathbf{q}_{L,1,h},\mathbf{q}_{R,1,h})$ in $H^1(0,T,V_h)\times (L^2(0,T,V_h))^2\times (L^2(0,T,V_h))^2$ and $(u_{2,h},\mathbf{q}_{L,2,h},\mathbf{q}_{R,2,h})$ in $H^1(0,T,W_h) \times (L^2(0,T,W_h))^2\times (L^2(0,T,W_h))^2$, respectively.
Denote $e_{u_{1,h}}=u_{1,h}-\tilde{u}_{1,h}$,  $e_{\mathbf{q}_{L,1,h}}=\mathbf{q}_{L,1,h}-\tilde{\mathbf{q}}_{L,1,h}$,  $e_{\mathbf{q}_{R,1,h}}=\mathbf{q}_{R,1,h}-\tilde{\mathbf{q}}_{R,1,h}$, $e_{u_{2,h}}=u_{2,h}-\tilde{u}_{2,h}$,  $e_{\mathbf{q}_{L,2,h}}=\mathbf{q}_{L,2,h}-\tilde{\mathbf{q}}_{L,2,h}$, and  $e_{\mathbf{q}_{R,2,h}}=\mathbf{q}_{R,2,h}-\tilde{\mathbf{q}}_{R,2,h}$ as the round-off errors.
\subsection{Stability analysis}
\begin{theorem}\label{thml2stab}
	Numerical schemes (\ref{equBin1sty}) and (\ref{equBin2sty}) are $L^2$ stable, and for all $t\in(0,T)$ we have
	\begin{equation}\label{equressta}
	\begin{aligned}
	\|e_{u_{1,h}}(T)\|^2_{L^{2}(\Omega)}+\|e_{u_{2,h}}(T)\|^2_{L^{2}(\Omega)}\leq\|e_{u_{1,h}}(0)\|^2_{L^{2}(\Omega)}+\|e_{u_{2,h}}(0)\|^2_{L^{2}(\Omega)}.
	\end{aligned}
	\end{equation}
\end{theorem}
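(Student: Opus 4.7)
The plan is to exploit the linearity of $B_1$ and $B_2$ in their solution arguments and then choose test functions equal to the errors themselves, so that all the \emph{transport-type} terms cancel between $B_1$ and $B_2$ and only nonnegative dissipative contributions survive, leaving a clean energy estimate.

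First I would subtract the schemes satisfied by $(u_{1,h},\mathbf{q}_{L,1,h},\mathbf{q}_{R,1,h},u_{2,h},\mathbf{q}_{L,2,h},\mathbf{q}_{R,2,h})$ and $(\tilde u_{1,h},\ldots,\tilde{\mathbf{q}}_{R,2,h})$. Because $B_1$ and $B_2$ are linear in all their arguments and the forcing $f$ is the same in both, this gives the homogeneous error equations
\begin{equation*}
B_1\bigl(e_{u_{1,h}},e_{u_{2,h}},e_{\mathbf{q}_{L,2,h}},e_{\mathbf{q}_{R,2,h}},v_1,\boldsymbol{\phi}_{L,2},\boldsymbol{\phi}_{R,2}\bigr)=0,
\end{equation*}
\begin{equation*}
B_2\bigl(e_{u_{2,h}},e_{u_{1,h}},e_{\mathbf{q}_{L,1,h}},e_{\mathbf{q}_{R,1,h}},v_2,\boldsymbol{\phi}_{L,1},\boldsymbol{\phi}_{R,1}\bigr)=0.
\end{equation*}
I would then choose $v_1=e_{u_{1,h}}$, $v_2=e_{u_{2,h}}$, $\boldsymbol{\phi}^{i,j}_{L,1}=e_{\mathbf{q}_{L,1,h}}\chi_{I_{i,j}}$, $\boldsymbol{\phi}^{i,j}_{R,1}=e_{\mathbf{q}_{R,1,h}}\chi_{I_{i,j}}$, and analogously $\boldsymbol{\phi}^{i,j}_{L,2}=e_{\mathbf{q}_{L,2,h}}\chi_{\bar I_{i,j}}$, $\boldsymbol{\phi}^{i,j}_{R,2}=e_{\mathbf{q}_{R,2,h}}\chi_{\bar I_{i,j}}$. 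These are admissible because each restriction lies in the appropriate local polynomial space, and $\sum_{i,j}\boldsymbol{\phi}^{i,j}_{L,2}=e_{\mathbf{q}_{L,2,h}}$ on $\Omega$ (and similarly for the other three).

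The next step is to add $B_1+B_2=0$ and examine each type of contribution. The time-derivative terms sum, cell by cell, to $\tfrac{1}{2}\tfrac{d}{dt}\bigl(\|e_{u_{1,h}}\|^2_{L^2(\Omega)}+\|e_{u_{2,h}}\|^2_{L^2(\Omega)}\bigr)$ after integration on $[0,T]$. The coupling terms with factor $1/\tau_{\max}$ combine as
\begin{equation*}
\tfrac{1}{\tau_{\max}}\bigl(e_{u_{1,h}}-e_{u_{2,h}},e_{u_{1,h}}\bigr)_\Omega+\tfrac{1}{\tau_{\max}}\bigl(e_{u_{2,h}}-e_{u_{1,h}},e_{u_{2,h}}\bigr)_\Omega=\tfrac{1}{\tau_{\max}}\|e_{u_{1,h}}-e_{u_{2,h}}\|^2_{L^2(\Omega)}\ge 0,
\end{equation*}
so these are dissipative. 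The volumetric/flux pairs $(\mathbf{q},\nabla v)-(\mathbf{n}\cdot\mathbf{q},v)_{\partial\cdot}$ appearing in the $u$-equations of $B_1$ and $B_2$, together with the $(u,\nabla\boldsymbol{\phi})-(u,\mathbf{n}\cdot\boldsymbol{\phi})_{\partial\cdot}$ pairs from the auxiliary equations of the opposite form, should cancel exactly after integration by parts on each cell; the key point, and the whole reason central LDG avoids numerical fluxes, is that the $W_h$-functions $\mathbf{q}_{L,2,h},\mathbf{q}_{R,2,h}$ are continuous across $\partial I_{i,j}$ (those points sit in the interior of $\bar I_{i',j'}$ cells), and symmetrically for $V_h$-functions across $\partial\bar I_{i,j}$, so all jump contributions vanish. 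Finally, the pure fractional-derivative pieces reduce, by linearity of $\mathbf{D}_L,\mathbf{D}_R$ and the choice of test functions, to inner products of the form $(\mathbf{D}_L^{(2-\alpha)/2}e_{q^x},\mathbf{D}_R^{(2-\alpha)/2}e_{q^x})_\Omega$; extending by zero outside $\Omega$ (the supports are in $\Omega$) and applying Lemma~\ref{lemLRcosnorm} yields $\cos\!\bigl(\pi(2-\alpha)/2\bigr)\|\mathbf{D}_L^{(2-\alpha)/2}e_{q^x}\|^2_{L^2(\mathbb R)}\ge 0$, since for $1<\alpha<2$ one has $(2-\alpha)/2\in(0,1/2)$ and $\cos\!\bigl(\pi(2-\alpha)/2\bigr)>0$, and similarly for the $\beta$-terms.

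Putting everything together, all non–time-derivative contributions are nonnegative, so
\begin{equation*}
\tfrac{1}{2}\bigl(\|e_{u_{1,h}}(T)\|^2_{L^2(\Omega)}+\|e_{u_{2,h}}(T)\|^2_{L^2(\Omega)}\bigr)\le\tfrac{1}{2}\bigl(\|e_{u_{1,h}}(0)\|^2_{L^2(\Omega)}+\|e_{u_{2,h}}(0)\|^2_{L^2(\Omega)}\bigr),
\end{equation*}
which is \eqref{equressta}. The main obstacle I expect is the bookkeeping in step three: carefully tracking that the $(\mathbf{q},\nabla v)$–plus–boundary terms coming from the first line of $B_1$ pair up with the $(u,\nabla\boldsymbol{\phi})$–plus–boundary terms coming from the second and third lines of $B_2$ (and vice versa) so that every interfacial integral cancels by the cross-mesh continuity of $V_h$ and $W_h$ functions, and that the surviving bilinear forms are exactly the symmetric $(\mathbf{D}_L e_q,\mathbf{D}_R e_q)$-pairings to which Lemma~\ref{lemLRcosnorm} applies after invoking Lemma~\ref{lemmalast} on each cell.
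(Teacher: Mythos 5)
Your proposal is correct and follows essentially the same route as the paper's proof: the same homogeneous error equations, the same test-function choices $v_1=e_{u_{1,h}}$, $v_2=e_{u_{2,h}}$, $\boldsymbol{\phi}^{i,j}=e_{\mathbf{q}}\chi$, cancellation of the volumetric/boundary pairs by Green's theorem exploiting the cross-mesh continuity, positivity of the fractional pairings via Lemma~\ref{lemLRcosnorm} with $\cos\bigl(\pi(2-\alpha)/2\bigr)>0$, and the coupling terms combining to $\tau_{max}^{-1}\|e_{u_{1,h}}-e_{u_{2,h}}\|^2_{L^2(\Omega)}\geq 0$. The only cosmetic difference is that the paper performs the cancellation separately inside $B_1$ and inside $B_2$ before summing, whereas you sum first; the outcome is identical.
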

\begin{proof}
	According to the definitions of $ e_{u_{1,h}} $, $ e_{\mathbf{q}_{L,2,h}} $, $ e_{\mathbf{q}_{R,2,h}} $, and Eqs. \eqref{equBin1sty} and \eqref{equBin2sty}, we obtain
	\begin{equation}\label{equBin1eq0}
	\begin{aligned}
	&B_{1}(e_{u_{1,h}},e_{u_{2,h}},e_{\mathbf{q}_{L,2,h}},e_{\mathbf{q}_{R,2,h}},v_1,\boldsymbol{\phi}_{L,2},\boldsymbol{\phi}_{R,2})=0,\\
	&B_{2}(e_{u_{2,h}},e_{u_{1,h}},e_{\mathbf{q}_{L,1,h}},e_{\mathbf{q}_{R,1,h}},v_2,\boldsymbol{\phi}_{L,1},\boldsymbol{\phi}_{R,1})=0
	\end{aligned}
	\end{equation}
for any function $v_1\in V_h$, $v_2\in V_h$, $\boldsymbol{\phi}^{i,j}_{L,1}\in(V_h)^2$, $\boldsymbol{\phi}^{i,j}_{L,2}\in(W_h)^2$, $\boldsymbol{\phi}^{i,j}_{R,1}\in(V_h)^2$, $\boldsymbol{\phi}^{i,j}_{R,2}\in(W_h)^2$ with ${\rm supp}(\boldsymbol{\phi}^{i,j}_{L,1})$ and ${\rm supp}( \boldsymbol{\phi}^{i,j}_{R,1})\subset I_{i,j}$, and ${\rm supp}( \boldsymbol{\phi}^{i,j}_{L,2})$ and ${\rm supp}( \boldsymbol{\phi}^{i,j}_{R,2})\subset \bar{I}_{i,j}$.
	Denoting $\chi_{\bar{I}_{i,j}}$ as the characteristic function of the closure of the cell $\bar{I}_{i,j}$
	and taking $v_1=e_{u_{1,h}} $, $ \boldsymbol{\phi}^{i,j}_{L,2}=e_{\mathbf{q}_{L,2,h}}\chi_{\bar{I}_{i,j}} $, and $ \boldsymbol{\phi}^{i,j}_{R,2}=e_{\mathbf{q}_{R,2,h}}\chi_{\bar{I}_{i,j}}$ for Eq. \eqref{equBin1sty},  we get
	\begin{equation*}
	\begin{aligned}
	&B_{1}(e_{u_{1,h}},e_{u_{2,h}},e_{\mathbf{q}_{L,2,h}},e_{\mathbf{q}_{R,2,h}},e_{u_{1,h}},\boldsymbol{\phi}_{L,2},\boldsymbol{\phi}_{R,2})\\
	=&\sum_{i=0,j=0}^{i=N-1,j=N-1}\int_{0}^{T}\left(\frac{\partial e_{u_{1,h}}}{\partial t},e_{u_{1,h}}\right)_{I_{i,j}}+\frac{1}{\tau_{max}}(e_{u_{1,h}}-e_{u_{2,h}},e_{u_{1,h}})_{I_{i,j}}\\
	&\qquad\qquad\quad+(e_{\mathbf{q}_{L,2,h}}+e_{\mathbf{q}_{R,2,h}},\nabla e_{u_{1,h}})_{I_{i,j}}
	-(\mathbf{n}\cdot(e_{\mathbf{q}_{L,2,h}}+e_{\mathbf{q}_{R,2,h}}),e_{u_{1,h}})_{\partial I_{i,j}}dt\\
	&+\sum_{i=0,j=0}^{i=N,j=N}\int_{0}^{T}\left (\mathbf{D}^{(2-\alpha)/2,(2-\beta)/2}_L e_{\mathbf{q}_{L,2,h}},\mathbf{D}^{(2-\alpha)/2,(2-\beta)/2}_R (e_{\mathbf{q}_{L,2,h}}\chi_{\bar{I}_{i,j}})\right )_{\Omega}\\
	&\qquad\qquad\qquad+\left(e_{u_{1,h}},\nabla (e_{\mathbf{q}_{L,2,h}}\chi_{\bar{I}_{i,j}})\right)_{\bar{I}_{i,j}}-\left(e_{u_{1,h}},\mathbf{n}\cdot (e_{\mathbf{q}_{L,2,h}}\chi_{\bar{I}_{i,j}})\right)_{\partial \bar{I}_{i,j}}\\
	&\qquad\qquad\qquad+\left (\mathbf{D}^{(2-\alpha)/2,(2-\beta)/2}_R e_{\mathbf{q}_{R,2,h}},\mathbf{D}^{(2-\alpha)/2,(2-\beta)/2}_L (e_{\mathbf{q}_{R,2,h}}\chi_{\bar{I}_{i,j}})\right )_{\Omega}\\
	&\qquad\qquad\qquad+\left(e_{u_{1,h}},\nabla (e_{\mathbf{q}_{R,2,h}}\chi_{\bar{I}_{i,j}})\right)_{\bar{I}_{i,j}}-\left(e_{u_{1,h}},\mathbf{n}\cdot (e_{\mathbf{q}_{R,2,h}}\chi_{\bar{I}_{i,j}})\right)_{\partial \bar{I}_{i,j}}dt.\\
	\end{aligned}
	\end{equation*}
Using Green's theorem leads to
	\begin{equation*}
	\begin{aligned}
	&B_{1}(e_{u_{1,h}},e_{u_{2,h}},e_{\mathbf{q}_{L,2,h}},e_{\mathbf{q}_{R,2,h}},e_{u_{1,h}},\boldsymbol{\phi}_{L,2},\boldsymbol{\phi}_{R,2})\\
	=&\sum_{i=0,j=0}^{i=N-1,j=N-1}\int_{0}^{T}\left(\frac{\partial e_{u_{1,h}}}{\partial t},e_{u_{1,h}}\right)_{I_{i,j}}+\frac{1}{\tau_{max}}(e_{u_{1,h}}-e_{u_{2,h}},e_{u_{1,h}})_{I_{i,j}}dt\\
	&+\int_{0}^{T}\left (\mathbf{D}^{(2-\alpha)/2,(2-\beta)/2}_L e_{\mathbf{q}_{L,2,h}},\mathbf{D}^{(2-\alpha)/2,(2-\beta)/2}_R e_{\mathbf{q}_{L,2,h}}\right )_{\Omega}\\
	&\qquad\quad+\left (\mathbf{D}^{(2-\alpha)/2,(2-\beta)/2}_R e_{\mathbf{q}_{R,2,h}},\mathbf{D}^{(2-\alpha)/2,(2-\beta)/2}_L e_{\mathbf{q}_{R,2,h}}\right )_{\Omega}dt.
	\end{aligned}
	\end{equation*}
	%By the definition of $ \mathbf{D}^{2-\alpha,2-\beta}_L $ and $ \mathbf{D}^{2-\alpha,2-\beta}_R $, Lemma \ref{lemLRcosnorm} and Lemma \ref{lemdisjoint}, we have
	%\begin{equation*}
	%	\begin{aligned}
	%		&\sum_{i=0,j=0}^{i=N+1,j=N+1}(\mathbf{D}^{2-\alpha,2-\beta}_Le_{\mathbf{q}_{L,2,h}},e_{\mathbf{q}_{L,2,h}})_{\bar{I}_{i,j}}\\
	%		=&\int_0^1|q^x_{L,2}(\cdot,y)|^2_{J^{(2-\alpha)/2}_{L,0\pi}}dy+\int_0^1|q^y_{L,2}(x,\cdot)|^2_{_{J^{(2-\beta)/2}_{L,\pi/2}}}dx,\\
	%		&\sum_{i=0,j=0}^{i=N+1,j=N+1}(\mathbf{D}^{2-\alpha,2-\beta}_Re_{\mathbf{q}_{R,2,h}},e_{\mathbf{q}_{R,2,h}})_{\bar{I}_{i,j}}\\
	%		=&\int_0^1|q^x_{R,2}(\cdot,y)|^2_{_{J^{(2-\alpha)/2}_{L,0\pi}}}dy+\int_0^1|q^y_{R,2}(x,\cdot)|^2_{_{J^{(2-\beta)/2}_{L,\pi/2}}}dx.
	%	\end{aligned}
	%\end{equation*} 
	Combining Lemma \ref{lemLRcosnorm}, we obtain
	\begin{equation*}
	\begin{aligned}
	&B_{1}(e_{u_{1,h}},e_{u_{2,h}},e_{\mathbf{q}_{L,2,h}},e_{\mathbf{q}_{R,2,h}},e_{u_{1,h}},\boldsymbol{\phi}_{L,2},\boldsymbol{\phi}_{R,2})\\
	=&\frac{1}{2}\int_{0}^{T}\frac{\partial}{\partial t}\|e_{u_{1,h}}\|^2_{L^2(\Omega)}dt+\frac{1}{\tau_{max}}\int_{0}^{T}(e_{u_{1,h}}-e_{u_{2,h}},e_{u_{1,h}})_{\Omega}dt\\
	&+\int_{0}^{T}\left (\int_0^1\cos\left (\frac{2-\alpha}{2}\pi\right )|e_{q^x_{L,2,h}}(\cdot,y)|^2_{H^{(2-\alpha)/2}(0,1)}dy\right .\\
	&\qquad\quad+\int_0^1\cos\left (\frac{2-\beta}{2}\pi\right )|e_{q^y_{L,2,h}}(x,\cdot)|^2_{H^{(2-\beta)/2}(0,1)}dx\\
	&\qquad\quad+\int_0^1\cos\left (\frac{2-\alpha}{2}\pi\right )|e_{q^x_{R,2,h}}(\cdot,y)|^2_{H^{(2-\alpha)/2}(0,1)}dy\\
	&\qquad\quad\left .+\int_0^1\cos\left (\frac{2-\beta}{2}\pi\right )|e_{q^y_{R,2,h}}(x,\cdot)|^2_{H^{(2-\beta)/2}(0,1)}dx\right )dt.
	\end{aligned}
	\end{equation*}
Similarly, there also exists 
	\begin{equation*}
	\begin{aligned}
	&B_{2}(e_{u_{2,h}},e_{u_{1,h}},e_{\mathbf{q}_{L,1,h}},e_{\mathbf{q}_{R,1,h}},e_{u_{2,h}},\boldsymbol{\phi}_{L,1},\boldsymbol{\phi}_{R,1})\\
	=&\frac{1}{2}\int_{0}^{T}\frac{\partial}{\partial t}\|e_{u_{2,h}}\|^2_{L^2(\Omega)}dt+\frac{1}{\tau_{max}}\int_{0}^{T}(e_{u_{2,h}}-e_{u_{1,h}},e_{u_{2,h}})_{\Omega}dt\\
	&+\int_{0}^{T}\left (\int_0^1\cos\left (\frac{2-\alpha}{2}\pi\right )|e_{q^x_{L,1,h}}(\cdot,y)|^2_{H^{(2-\alpha)/2}(0,1)}dy\right .\\
	&\qquad\quad+\int_0^1\cos\left (\frac{2-\beta}{2}\pi\right )|e_{q^y_{L,1,h}}(x,\cdot)|^2_{H^{(2-\beta)/2}(0,1)}dx\\
	&\qquad\quad+\int_0^1\cos\left (\frac{2-\alpha}{2}\pi\right )|e_{q^x_{R,1,h}}(\cdot,y)|^2_{H^{(2-\alpha)/2}(0,1)}dy\\
	&\qquad\quad\left .+\int_0^1\cos\left (\frac{2-\beta}{2}\pi\right )|e_{q^y_{R,1,h}}(x,\cdot)|^2_{H^{(2-\beta)/2}(0,1)}dx\right )dt.
	\end{aligned}
	\end{equation*}
	Combining \eqref{equBin1eq0}, the above equalities, and the fact
	\begin{equation*}
	\int_{0}^{T}(e_{u_{1,h}}-e_{u_{2,h}},e_{u_{1,h}})_{\Omega}+(e_{u_{2,h}}-e_{u_{1,h}},e_{u_{2,h}})_{\Omega}dt=\int_{0}^{T}\|e_{u_{1,h}}-e_{u_{2,h}}\|^2_{L^2(\Omega)}dt,
	\end{equation*}
	we get \eqref{equressta}.
\end{proof}
\subsection{Error estimates}
To do the error estimations, we define the orthogonal projection operators $\mathbb{P}_1:H^1_0(\Omega)\rightarrow V_h $, $\mathbb{S}_{1}:H^{(2-\alpha)/2}(\Omega)\times H^{(2-\beta)/2}(\Omega)\rightarrow (V_h)^2$,
% $\mathbb{S}_{L,1}:H^{(2-\alpha)/2}(\Omega)\times H^{(2-\beta)/2}(\Omega)\rightarrow (V_h)^2$, and $ \mathbb{S}_{R,1}:H^{(2-\alpha)/2}(\Omega)\times H^{(2-\beta)/2}(\Omega)\rightarrow (V_h)^2$, 
i.e., for all the cells $ I_{i,j} $, $i=0,1\cdots,N-1$, $j=0,1\cdots,N-1$, we have
\begin{equation*}
\begin{aligned}
&\left (\mathbb{P}_1u-u,v\right )_{I_{i,j}}=0~~~~~\forall ~v\in P^2_k(I_{i,j}),\\
&(\mathbb{S}_{1}\mathbf{u}-\mathbf{u},\mathbf{v})_{I_{i,j}}=0~~~\forall ~\mathbf{v}\in (P^2_k(I_{i,j}))^2,
%&(\mathbb{S}_{L,1}\mathbf{u}-\mathbf{u},\mathbf{v})_{I_{i,j}}=0~~~\forall ~\mathbf{v}\in (P^2_k(I_{i,j}))^2,\\
%&(\mathbb{S}_{R,1}\mathbf{u}-\mathbf{u},\mathbf{v})_{I_{i,j}}=0~~~\forall ~\mathbf{v}\in (P^2_k(I_{i,j}))^2,
\end{aligned}
\end{equation*}
where $P_k^2(I)$ is the space of $k$-th order polynomials in two variables on the 2-simplex $I$.
Similarly, we define the orthogonal projection operators $ \mathbb{P}_2:H^1_0(\Omega)\rightarrow W_h $, $ \mathbb{S}_{2}:H^{(2-\alpha)/2}(\Omega)\times H^{(2-\beta)/2}(\Omega)\rightarrow (W_h)^2$, 
%$ \mathbb{S}_{L,2}:H^{(2-\alpha)/2}(\Omega)\times H^{(2-\beta)/2}(\Omega)\rightarrow (W_h)^2$ and $ \mathbb{S}_{R,2}:H^{(2-\alpha)/2}(\Omega)\times H^{(2-\beta)/2}(\Omega)\rightarrow (W_h)^2$, 
i.e., for all the cells $ \bar{I}_{i,j} $, $i=0,1\cdots,N$, $j=0,1\cdots,N$, there are
\begin{equation*}
\begin{aligned}
&\left (\mathbb{P}_2u-u,v\right )_{\bar{I}_{i,j}}=0~~~~~\forall ~v\in P^2_k(\bar{I}_{i,j}),\\
%&(\mathbb{S}_{L,2}\mathbf{u}-\mathbf{u},\mathbf{v})_{\bar{I}_{i,j}}=0~~~\forall ~\mathbf{v}\in (P^2_k(\bar{I}_{i,j}))^2,\\
%&(\mathbb{S}_{R,2}\mathbf{u}-\mathbf{u},\mathbf{v})_{\bar{I}_{i,j}}=0~~~\forall ~\mathbf{v}\in (P^2_k(\bar{I}_{i,j}))^2.
&(\mathbb{S}_{2}\mathbf{u}-\mathbf{u},\mathbf{v})_{\bar{I}_{i,j}}=0~~~\forall ~\mathbf{v}\in (P^2_k(\bar{I}_{i,j}))^2.
\end{aligned}
\end{equation*}

The following inverse and trace inequalities will also be used.
\begin{lemma}[\cite{Brenner2010}]
	For any $ \mathbf{v}\in (P^2_k(I))^2 $ and $ w\in P^2_k(I) $, there exist positive constants $C$, such that
	\begin{equation*}
	\begin{aligned}
	&\|\mathbf{n}\cdot\mathbf{v}\|^2_{L^2(\partial I)}\leq Ch^{-1}\|\mathbf{v}\|^2_{L^2(I)},\\
	&\|w\|^2_{L^2(\partial I)}\leq Ch^{-1}\|w\|^2_{L^2(I)},\\
	&\|w\|_{H^l(I)}\leq Ch^{m-l}\|w\|_{H^m(I)},
	\end{aligned}
	\end{equation*}
	where $0\leq m\leq l$.
\end{lemma}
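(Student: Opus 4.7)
The plan is to establish all three inequalities by the classical scaling (pull-back) argument on a reference element, using the fact that on any finite-dimensional space all norms are equivalent. Fix a reference 2-simplex $\hat I$ of unit diameter, and for each cell $I$ of diameter $\mathcal{O}(h)$ let $F_I:\hat I\to I$ be the affine map $\hat{\mathbf{x}}\mapsto B\hat{\mathbf{x}}+b$ with $|\det B|\sim h^2$, $\|B\|\sim h$, $\|B^{-1}\|\sim h^{-1}$; correspondingly $|\partial I|\sim h$. For any polynomial $w\in P_k^2(I)$ set $\hat w(\hat{\mathbf{x}})=w(F_I(\hat{\mathbf{x}}))\in P_k^2(\hat I)$, and similarly for $\mathbf{v}$.

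First I would prove the two trace inequalities. A standard change of variables gives $\|w\|_{L^2(\partial I)}^2=\mathcal{O}(h)\,\|\hat w\|_{L^2(\partial\hat I)}^2$ and $\|w\|_{L^2(I)}^2=\mathcal{O}(h^2)\,\|\hat w\|_{L^2(\hat I)}^2$. On the finite-dimensional space $P_k^2(\hat I)$ the two norms $\hat w\mapsto\|\hat w\|_{L^2(\partial\hat I)}$ and $\hat w\mapsto\|\hat w\|_{L^2(\hat I)}$ are both norms (the first because a polynomial of degree $k$ that vanishes on $\partial\hat I$ would be identically zero on a set with nonempty interior in each edge, forcing $\hat w\equiv 0$), hence equivalent with a constant $C(k)$. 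Pulling this equivalence back to $I$ yields $\|w\|_{L^2(\partial I)}^2\le Ch^{-1}\|w\|_{L^2(I)}^2$. The vector version $\|\mathbf{n}\cdot\mathbf{v}\|_{L^2(\partial I)}^2\le Ch^{-1}\|\mathbf{v}\|_{L^2(I)}^2$ follows from the scalar trace inequality applied componentwise together with $|\mathbf{n}\cdot\mathbf{v}|\le|\mathbf{v}|$.

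Next I would prove the inverse inequality $\|w\|_{H^l(I)}\le Ch^{m-l}\|w\|_{H^m(I)}$ (with $l\ge m$ as the natural reading of the ordering in the statement). Differentiating $\hat w=w\circ F_I$ shows that each physical derivative of order $j$ satisfies $|\partial^j w\circ F_I|\le \|B^{-1}\|^j\,|\hat\partial^j\hat w|\lesssim h^{-j}|\hat\partial^j\hat w|$, so integrating and combining the Jacobian $|\det B|\sim h^2$ yields $\|w\|_{H^j(I)}\sim h^{1-j}\|\hat w\|_{H^j(\hat I)}$ up to equivalence in both directions. On the finite-dimensional space $P_k^2(\hat I)$ the full $H^l(\hat I)$-norm and the $H^m(\hat I)$-norm are equivalent (every nontrivial seminorm on a finite-dimensional space is bounded by any norm). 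Combining these equivalences gives $\|w\|_{H^l(I)}\le Ch^{1-l}\|\hat w\|_{H^l(\hat I)}\le C'h^{1-l}\|\hat w\|_{H^m(\hat I)}\le C''h^{m-l}\|w\|_{H^m(I)}$, which is the desired inequality.

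The three inequalities are standard (see Brenner--Scott); the main obstacle is only bookkeeping the scaling powers $|\det B|$, $\|B\|^j$, $\|B^{-1}\|^j$ correctly and noting the dependence of $C$ on the polynomial degree $k$ and on the shape-regularity of $\hat I$, but not on $h$. Since the cells $I_{i,j}$ and $\bar I_{i,j}$ introduced in the discretization are uniform squares (or regular subdivisions thereof) of diameter $\mathcal{O}(h)$, the shape-regularity assumption is automatic and the constant $C$ above is uniform in $i,j$, completing the proof.
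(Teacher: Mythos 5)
The paper offers no proof of this lemma at all --- it is quoted from Brenner--Scott --- so there is nothing in-paper to compare against; your reference-element scaling argument is the standard route and is the right one. The two trace estimates come out correctly. The only thing to amend there is your justification that $\hat w\mapsto\|\hat w\|_{L^2(\partial\hat I)}$ is a norm on $P^2_k(\hat I)$: it is not for $k\ge 3$ (the bubble $\lambda_1\lambda_2\lambda_3$ vanishes on all of $\partial\hat I$ without vanishing identically), but this does not matter, since you only need the one-sided bound $\|\hat w\|_{L^2(\partial\hat I)}\le C\|\hat w\|_{L^2(\hat I)}$, and any seminorm on a finite-dimensional space is dominated by any norm on it.

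The genuine gap is in the inverse estimate. You assert $\|w\|_{H^j(I)}\sim h^{1-j}\|\hat w\|_{H^j(\hat I)}$ ``up to equivalence in both directions,'' but the full Sobolev norm does not scale homogeneously under the affine pull-back; only the seminorms do, $|w|_{H^j(I)}\sim h^{1-j}|\hat w|_{H^j(\hat I)}$. Concretely, your last link $\|\hat w\|_{H^m(\hat I)}\le C h^{m-1}\|w\|_{H^m(I)}$ already fails for $w\equiv 1$ and $m\ge 1$: there the left side is $\sim h^{-1}\|w\|_{L^2(I)}$ while the right side is $Ch^{m-1}\|w\|_{L^2(I)}$, so the chain as written does not close. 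The standard repair is to argue seminorm by seminorm. For $j\le m$ the bound $|w|_{H^j(I)}\le\|w\|_{H^m(I)}\le h^{m-l}\|w\|_{H^m(I)}$ is trivial because $h^{m-l}\ge 1$. For $m<j\le l$, write each derivative $\partial^\gamma w$ with $|\gamma|=j$ as an order-$(j-m)$ derivative of some $\partial^\beta w$ with $|\beta|=m$, and apply the basic scaled inverse estimate $|u|_{H^s(I)}\le Ch^{-s}\|u\|_{L^2(I)}$ (which does follow from your seminorm scaling together with norm equivalence on $P^2_k(\hat I)$) with $u=\partial^\beta w$ and $s=j-m$; this gives $|w|_{H^j(I)}\le Ch^{m-j}|w|_{H^m(I)}\le Ch^{m-l}\|w\|_{H^m(I)}$, and summing over $j\le l$ yields the claim. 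With that correction your proof is complete; the closing remark on shape-regularity is fine here, since the overlapping cells are uniform rectangles and the half-width boundary cells $\bar I_{0,j}$, $\bar I_{N,j}$, etc.\ remain uniformly shape-regular.
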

\begin{theorem}\label{errorth}
	Numerical schemes (\ref{equBin1sty}) and (\ref{equBin2sty}) satisfy the $L^2$ error estimate
	\begin{equation}\label{errorestimate}
	\begin{aligned}
	\|u-u_{1,h}\|_{L^2(
		\Omega)}+\|u-u_{2,h}\|_{L^2(\Omega)}\leq Ch^k,
	\end{aligned}
	\end{equation}
	where $u$ is the exact solution of Eq. \eqref{equation2D}, and $u_{1,h}$ and $u_{2,h}$  are the solutions of the numerical schemes (\ref{equBin1sty}) and (\ref{equBin2sty}); and $k$ is the order of polynomial.
\end{theorem}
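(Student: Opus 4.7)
The plan is to split the errors into projection and discrete parts, derive a Galerkin-orthogonality identity for the discrete parts, and then bound them by imitating the stability argument of Theorem \ref{thml2stab}. Concretely, set $\eta_1 = u - \mathbb{P}_1 u$ and $\xi_1 = u_{1,h} - \mathbb{P}_1 u$ (so that $u - u_{1,h} = \eta_1 - \xi_1$), define $\eta_2,\xi_2$ analogously via $\mathbb{P}_2$, and use $\mathbb{S}_1,\mathbb{S}_2$ to obtain the corresponding splittings $\eta_{\mathbf{q}_{L,j}},\xi_{\mathbf{q}_{L,j}},\eta_{\mathbf{q}_{R,j}},\xi_{\mathbf{q}_{R,j}}$ of the auxiliary variables. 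Standard polynomial approximation theory gives $\|\eta_j\|_{L^2(\Omega)}\le Ch^{k+1}$ and the fractional-seminorm bound $|\eta_{\mathbf{q}}|_{H^{(2-\alpha)/2}(\Omega)}\le Ch^{k+\alpha/2}$, both sharper than the target rate, so by the triangle inequality it suffices to prove the energy bound $\|\xi_1(T)\|_{L^2(\Omega)}+\|\xi_2(T)\|_{L^2(\Omega)}\le Ch^k$.

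To derive the discrete error equation, I use that the exact solution $u$ (assumed sufficiently smooth) satisfies the continuous analogues of $B_1$ and $B_2$ with the same right-hand sides as the semi-discrete scheme. Subtracting (\ref{equBin1sty})--(\ref{equBin2sty}) from those identities and exploiting linearity of $B_1,B_2$ in their first four arguments yields the Galerkin orthogonality $B_1(\xi_1,\xi_2,\xi_{\mathbf{q}_{L,2}},\xi_{\mathbf{q}_{R,2}},\ldots)=B_1(\eta_1,\eta_2,\eta_{\mathbf{q}_{L,2}},\eta_{\mathbf{q}_{R,2}},\ldots)$ with identical test-function slots on both sides, together with the corresponding $B_2$ identity. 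I then choose exactly the test functions from the stability proof, namely $v_j=\xi_j$ and $\boldsymbol{\phi}^{i,j}_{\cdot,2}=\xi_{\mathbf{q}_{\cdot,2}}\chi_{\bar I_{i,j}}$, $\boldsymbol{\phi}^{i,j}_{\cdot,1}=\xi_{\mathbf{q}_{\cdot,1}}\chi_{I_{i,j}}$, and add the two identities. Green's theorem and Lemma \ref{lemLRcosnorm} turn the left-hand side into $\tfrac12\frac{d}{dt}(\|\xi_1\|_{L^2(\Omega)}^2+\|\xi_2\|_{L^2(\Omega)}^2)+\tfrac{1}{\tau_{max}}\|\xi_1-\xi_2\|_{L^2(\Omega)}^2$ plus nonnegative fractional seminorms of the $\xi_{\mathbf{q}}$.

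On the right-hand side the defining orthogonality of $\mathbb{P}_j$ and $\mathbb{S}_j$ annihilates the volume integrals pairing projection errors with polynomials on the matching mesh; the survivors are interface integrals (bounded through the trace and inverse inequalities stated above) and fractional cross pairings of the form $(\mathbf{D}^{(2-\alpha)/2,(2-\beta)/2}_L\eta_{\mathbf{q}_{L,2}},\mathbf{D}^{(2-\alpha)/2,(2-\beta)/2}_R\xi_{\mathbf{q}_{L,2}}\chi_{\bar I_{i,j}})_\Omega$ and their mirror images. I would bound the latter by Cauchy--Schwarz combined with the norm equivalence of Lemma \ref{lemequnorm2D}, absorb half of each pairing into the left-hand side via Young's inequality, and apply Gronwall's inequality to obtain the desired bound $\|\xi_j(T)\|_{L^2(\Omega)}\le Ch^k$; combining with the projection estimates through the triangle inequality then yields (\ref{errorestimate}).

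The main technical obstacle lies precisely in these fractional cross terms. The indicator truncation $\chi_{\bar I_{i,j}}$ destroys the cellwise polynomial orthogonality at the cell interfaces, and the nonlocal operators $\mathbf{D}^{(2-\alpha)/2,(2-\beta)/2}_{L/R}$ then couple the staggered meshes $\{I_{i,j}\}$ and $\{\bar I_{i,j}\}$ across the whole of $\Omega$, so the estimate cannot be reduced to cellwise bounds as in classical LDG analyses. Lemma \ref{lemmalast}, which extends Lemma \ref{lemdisjoint} to test functions supported on a proper subinterval, is the key tool for transferring a full-order fractional derivative onto such a locally supported test function; verifying its hypotheses for $\xi_{\mathbf{q}}\chi_{\bar I_{i,j}}$ and controlling the resulting pairings sharply enough to preserve the $h^k$ rate is where I expect the proof to be most delicate.
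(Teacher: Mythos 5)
Your proposal follows essentially the same route as the paper's proof: the same projection/discrete splitting (your $\xi_j,\eta_j$ are the paper's $-v_j,-v^e_j$), the same Galerkin orthogonality and choice of test functions $\xi_j$ and $\xi_{\mathbf q}\chi_{\bar I_{i,j}}$, the same reduction of the left-hand side via Green's theorem and Lemma \ref{lemLRcosnorm} to the stability-type energy identity, and the same Cauchy--Schwarz/Young/absorption/Gronwall treatment of the right-hand side. The one bookkeeping slip is your claim that only interface integrals and fractional pairings survive the projection orthogonality: the cross-mesh volume pairings $(\boldsymbol{\phi}^e_{L,2}+\boldsymbol{\phi}^e_{R,2},\nabla v_1)_{I_{i,j}}$ and $(v^e_1,\nabla\boldsymbol{\phi}^{i,j}_{L,2})_{\bar I_{i,j}}$ do \emph{not} vanish (the projection error lives on one mesh while the test polynomial lives on the staggered one), but since they are bounded via the inverse inequality at the same order $h^{2k}$ as your interface terms, your final $O(h^k)$ rate is unaffected.
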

\begin{proof}
	Denote
	\begin{equation*}
	\begin{aligned}
	&e_{u_{1}}=u(x,y,t)-u_{1,h}(x,y,t),\ e_{\mathbf{q}_{L,1}}=\mathbf{q}_{L}-\mathbf{q}_{L,1,h},\ e_{\mathbf{q}_{R,1}}=\mathbf{q}_{R}-\mathbf{q}_{R,1,h},\\
	&e_{u_{2}}=u(x,y,t)-u_{2,h}(x,y,t),\ e_{\mathbf{q}_{L,2}}=\mathbf{q}_{L}-\mathbf{q}_{L,2,h},\ e_{\mathbf{q}_{R,2}}=\mathbf{q}_{R}-\mathbf{q}_{R,2,h}.
	\end{aligned}
	\end{equation*}
	According to \eqref{equBin1sty} and \eqref{equBin2sty}, we obtain
	\begin{equation}\label{equErest1}
	\begin{aligned}
	B_1(e_{u_1},e_{u_2},e_{\mathbf{q}_{L,2}},e_{\mathbf{q}_{R,2}},v_1,\boldsymbol{\phi}_{L,2},\boldsymbol{\phi}_{R,2})=0,\\
	B_2(e_{u_2},e_{u_1},e_{\mathbf{q}_{L,1}},e_{\mathbf{q}_{R,1}},v_2,\boldsymbol{\phi}_{L,1},\boldsymbol{\phi}_{R,1})=0,
	\end{aligned}
	\end{equation}
	for all $ (v_1,\boldsymbol{\phi}^{i,j}_{L,1}, \boldsymbol{\phi}^{i,j}_{R,1})\in H^1(0,T;V^h)\times(L^2(0,T;V^h))^2\times (L^2(0,T;V^h))^2$ and
	$ (v_2,\boldsymbol{\phi}^{i,j}_{L,2},\boldsymbol{\phi}^{i,j}_{R,2})\in H^1(0,T;W^h)\times(L^2(0,T;W^h))^2\times (L^2(0,T;W^h))^2$ with ${\rm supp}( \boldsymbol{\phi}^{i,j}_{L,1})$ and ${\rm  supp}( \boldsymbol{\phi}^{i,j}_{R,1})\subset I_{i,j}$, and ${\rm supp}( \boldsymbol{\phi}^{i,j}_{L,2})$ and $ {\rm supp}( \boldsymbol{\phi}^{i,j}_{R,2})\subset \bar{I}_{i,j}$. Then we take
	\begin{equation*}
	\begin{aligned}
	&v_1=\mathbb{P}_1u-u_{1,h},\ \boldsymbol{\phi}^{i,j}_{L,1}=(\mathbb{S}_{1}\mathbf{q}_{L}-\mathbf{q}_{L,1,h})\chi_{I_{i,j}},\ \boldsymbol{\phi}^{i,j}_{R,1}=(\mathbb{S}_{1}\mathbf{q}_{R}-\mathbf{q}_{R,1,h})\chi_{I_{i,j}},\\
	&v_2=\mathbb{P}_2u-u_{2,h},\
	\boldsymbol{\phi}^{i,j}_{L,2}=(\mathbb{S}_{2}\mathbf{q}_{L}-\mathbf{q}_{L,2,h})\chi_{\bar{I}_{i,j}},\
	\boldsymbol{\phi}^{i,j}_{R,2}=(\mathbb{S}_{2}\mathbf{q}_{R}-\mathbf{q}_{R,2,h})\chi_{\bar{I}_{i,j}}
	\end{aligned}
	\end{equation*}
	in \eqref{equErest1} and denote
	\begin{equation*}
	\begin{aligned}
	&\boldsymbol{\phi}'_{L,1}=\mathbb{S}_{1}\mathbf{q}_{L}-\mathbf{q}_{L,1,h},\ \boldsymbol{\phi}'_{R,1}=\mathbb{S}_{1}\mathbf{q}_{R}-\mathbf{q}_{R,1,h},\\
	&\boldsymbol{\phi}'_{L,2}=\mathbb{S}_{2}\mathbf{q}_{L}-\mathbf{q}_{L,2,h},\
	\boldsymbol{\phi}'_{R,2}=\mathbb{S}_{2}\mathbf{q}_{R}-\mathbf{q}_{R,2,h}.
	\end{aligned}
	\end{equation*}
	By simple calculations, we obtain
	\begin{align}
	%\begin{aligned}
	\label{equbin1veqve}
	B_1(v_1,v_2,\boldsymbol{\phi}'_{L,2},\boldsymbol{\phi}'_{R,2},v_1,\boldsymbol{\phi}_{L,2},\boldsymbol{\phi}_{R,2})&=B_1(v^e_1,v^e_2,\boldsymbol{\phi}^e_{L,2},\boldsymbol{\phi}^e_{R,2},v_1,\boldsymbol{\phi}_{L,2},\boldsymbol{\phi}_{R,2}),\\
	\label{equbin2veqve}
	B_2(v_2,v_1,\boldsymbol{\phi}'_{L,1},\boldsymbol{\phi}'_{R,1},v_2,\boldsymbol{\phi}_{L,1},\boldsymbol{\phi}_{R,1})&=B_2(v^e_2,v^e_1,\boldsymbol{\phi}^e_{L,1},\boldsymbol{\phi}^e_{R,1},v_2,\boldsymbol{\phi}_{L,1},\boldsymbol{\phi}_{R,1}),
	%\end{aligned}
	\end{align}
	where
	\begin{equation*}
	\begin{aligned}
	&v^e_1=\mathbb{P}_1u-u,\ \boldsymbol{\phi}^e_{L,1}=\mathbb{S}_{1}\mathbf{q}_{L}-\mathbf{q}_{L},\ \boldsymbol{\phi}^e_{R,1}=\mathbb{S}_{1}\mathbf{q}_{R}-\mathbf{q}_{R},\\
	&v^e_2=\mathbb{P}_2u-u,\
	\boldsymbol{\phi}^e_{L,2}=\mathbb{S}_{2}\mathbf{q}_{L}-\mathbf{q}_{L},\
	\boldsymbol{\phi}^e_{R,2}=\mathbb{S}_{2}\mathbf{q}_{R}-\mathbf{q}_{R}.
	\end{aligned}
	\end{equation*}
	For \eqref{equbin1veqve}, according to the proof of Theorem \ref{thml2stab}, there exists
	\begin{equation*}
	\begin{aligned}
	&B_1(v_1,v_2,\boldsymbol{\phi}'_{L,2},\boldsymbol{\phi}'_{R,2},v_1,\boldsymbol{\phi}_{L,2},\boldsymbol{\phi}_{R,2})\\
	=&\frac{1}{2}\int_{0}^{T}\frac{\partial}{\partial t}\|v_1\|^2_{L^2(\Omega)}dt+\frac{1}{\tau_{max}}\int_{0}^{T}(v_1-v_2,v_1)_{\Omega}dt\\
	&+\int_{0}^{T}\left (\int_0^1\cos\left (\frac{2-\alpha}{2}\pi\right )|\phi'^x_{L,2}(\cdot,y)|^2_{H^{(2-\alpha)/2}(0,1)}dy\right .\\
	&\qquad\quad+\int_0^1\cos\left (\frac{2-\beta}{2}\pi\right )|\phi'^y_{L,2}(x,\cdot)|^2_{H^{(2-\beta)/2}(0,1)}dx\\
	&\qquad\quad+\int_0^1\cos\left (\frac{2-\alpha}{2}\pi\right )|\phi'^x_{R,2}(\cdot,y)|^2_{H^{(2-\alpha)/2}(0,1)}dy\\
	&\qquad\quad\left .+\int_0^1\cos\left (\frac{2-\beta}{2}\pi\right )|\phi'^y_{R,2}(x,\cdot)|^2_{H^{(2-\beta)/2}(0,1)}dx\right )dt.
	\end{aligned}
	\end{equation*}
	As for the right side of \eqref{equbin1veqve}, we have
	\begin{equation*}
	\begin{aligned}
	&B_1(v^e_1,v^e_2,\boldsymbol{\phi}^e_{L,2},\boldsymbol{\phi}^e_{R,2},v_1,\boldsymbol{\phi}_{L,2},\boldsymbol{\phi}_{R,2})\\
	=&\sum_{i=0,j=0}^{i=N-1,j=N-1}\int_{0}^{T}\left(\frac{\partial v^e_1}{\partial t},v_1\right)_{I_{i,j}}+(\boldsymbol{\phi}^e_{L,2}+\boldsymbol{\phi}^e_{R,2},\nabla v_1)_{I_{i,j}}\\
	&\qquad\qquad\qquad\quad-\left(\mathbf{n}\cdot(\boldsymbol{\phi}^e_{L,2}+\boldsymbol{\phi}^e_{R,2}),v_1\right)_{\partial I_{i,j}}dt\\
	&+\sum_{i=0,j=0}^{i=N,j=N}\int_{0}^{T}\left (\mathbf{D}^{(2-\alpha)/2,(2-\beta)}_L\boldsymbol{\phi}^e_{L,2},\mathbf{D}^{(2-\alpha)/2,(2-\beta)}_R\boldsymbol{\phi}^{i,j}_{L,2}\right )_{\Omega}\\
	&\qquad\qquad\qquad+(v^e_1,\nabla\boldsymbol{\phi}^{i,j}_{L,2})_{\bar{I}_{i,j}}
	-(v^e_1,\mathbf{n}\cdot \boldsymbol{\phi}^{i,j}_{L,2})_{\partial \bar{I}_{i,j}}\\
	&\qquad\qquad\qquad+\left (\mathbf{D}^{(2-\alpha)/2,(2-\beta)}_R\boldsymbol{\phi}^e_{R,2},\mathbf{D}^{(2-\alpha)/2,(2-\beta)}_L\boldsymbol{\phi}^{i,j}_{R,2}\right )_{\Omega}\\
	&\qquad\qquad\qquad+(v^e_1,\nabla \boldsymbol{\phi}^{i,j}_{R,2})_{\bar{I}_{i,j}}-(v^e_1,\mathbf{n}\cdot \boldsymbol{\phi}^{i,j}_{R,2})_{\partial \bar{I}_{i,j}}dt\\
	&+\frac{1}{\tau_{max}}\int_{0}^{T}(v^e_1-v^e_2,v_1)_{\Omega}dt\\
	=&\uppercase\expandafter{\romannumeral1}+\uppercase\expandafter{\romannumeral2}+\uppercase\expandafter{\romannumeral3}+\uppercase\expandafter{\romannumeral4}+\uppercase\expandafter{\romannumeral5},
	\end{aligned}
	\end{equation*}
	where
	\begin{equation*}
	\begin{aligned}
	\uppercase\expandafter{\romannumeral1}=&\int_{0}^{T}\left(\frac{\partial v^e_1}{\partial t},v_1\right)_\Omega dt,\\
	\uppercase\expandafter{\romannumeral2}=&\int_{0}^{T}\left (\mathbf{D}^{(2-\alpha)/2,(2-\beta)/2}_L\boldsymbol{\phi}^e_{L,2},\mathbf{D}^{(2-\alpha)/2,(2-\beta)/2}_R\boldsymbol{\phi}_{L,2}^{i,j}\right )_\Omega\\
	&\qquad+\left (\mathbf{D}^{(2-\alpha)/2,(2-\beta)/2}_R\boldsymbol{\phi}^e_{R,2},\mathbf{D}^{(2-\alpha)/2,(2-\beta)/2}_L\boldsymbol{\phi}_{R,2}^{i,j}\right )_{\Omega}dt,\\
	\uppercase\expandafter{\romannumeral3}=&\sum_{i=0,j=0}^{i=N-1,j=N-1}\int_{0}^{T}(\boldsymbol{\phi}^e_{L,2}+\boldsymbol{\phi}^e_{R,2},\nabla v_1)_{I_{i,j}}-\left(\mathbf{n}\cdot(\boldsymbol{\phi}^e_{L,2}+\boldsymbol{\phi}^e_{R,2}),v_1\right)_{\partial I_{i,j}}dt,\\
	\uppercase\expandafter{\romannumeral4}=&\sum_{i=0,j=0}^{i=N,j=N}\int_{0}^{T}(v^e_1,\nabla\boldsymbol{\phi}^{i,j}_{L,2})_{\bar{I}_{i,j}}-(v^e_1,\mathbf{n}\cdot \boldsymbol{\phi}^{i,j}_{L,2})_{\partial \bar{I}_{i,j}}\\
	&\qquad\quad\qquad+(v^e_1,\nabla\boldsymbol{\phi}^{i,j}_{R,2})_{\bar{I}_{i,j}}-(v^e_1,\mathbf{n}\cdot \boldsymbol{\phi}^{i,j}_{R,2})_{\partial \bar{I}_{i,j}}dt,\\
	\uppercase\expandafter{\romannumeral5}=&\frac{1}{\tau_{max}}\int_{0}^{T}(v^e_1-v^e_2,v_1)_{\Omega}dt.
	\end{aligned}
	\end{equation*}
	By Cauchy-Schwarz inequality and standard approximation theory, we obtain
	\begin{equation*}
	\uppercase\expandafter{\romannumeral1}\leq Ch^{2k+2}+C\int_{0}^{T}\|v_1(s)\|^2_{L^2(\Omega)}dt.
	\end{equation*}
	Using Cauchy-Schwarz inequality and Lemma \ref{lemequnorm2D}, we have
	\begin{equation*}
	\begin{aligned}
	\uppercase\expandafter{\romannumeral2}
	\leq&\int_{0}^{T}C\|\phi^{e,x}_{L,2}\|_{H^{(2-\alpha)/2}(\Omega)}\|\phi^x_{L,2}\|_{H^{(2-\alpha)/2}(\Omega)}+C\|\phi^{e,y}_{L,2}\|_{H^{(2-\beta)/2}(0,1)}\|\phi^y_{L,2}\|_{H^{(2-\beta)/2}(\Omega)}\\
	&+C\|\phi^{e,x}_{R,2}\|_{H^{(2-\alpha)/2}(0,1)}\|\phi^x_{R,2}\|_{H^{(2-\alpha)/2}(\Omega)}+C\|\phi^{e,y}_{R,2}\|_{H^{(2-\beta)/2}(0,1)}\|\phi^y_{R,2}\|_{H^{(2-\beta)/2}(\Omega)}dt.
	\end{aligned}
	\end{equation*}
	%	By the definition of operator $ \mathbf{D}^{2-\alpha,2-\beta}_R $ and $ \boldsymbol{\phi}_{L,2} $ , we obtain
	%	\begin{equation*}
	%		\begin{aligned}
	%			\|\mathbf{D}^{(2-\alpha)/2,(2-\beta)/2}_R\cdot\boldsymbol{\phi}_{L,2}\|_{L^2(\Omega)}\leq&\|~_xD^{(2-\alpha)/2}_1\phi^x_{L,2}\|_{L^2(\mathbb{R}^2)}+\|~_xD^{(2-\beta)/2}_1\phi^y_{L,2}\|_{L^2(\mathbb{R}^2)}\\
	%			\leq&\|\phi^x_{L,2}\|_{H^{(2-\alpha)/2}}+\|\phi^y_{L,2}\|_{H^{(2-\beta)/2}}.
	%		\end{aligned}
	%	\end{equation*}
	According to the property of the  standard projection, the standard properties of interpolation spaces \cite{Adams1975}, and Young's inequality, we obtain
	\begin{equation*}
	\begin{aligned}
	\uppercase\expandafter{\romannumeral2}\leq&\frac{C}{\epsilon_1}h^{2k+\alpha}+\epsilon_1\int_{0}^{T}\|\phi^{x}_{L,2}\|^2_{H^{(2-\alpha)/2}(\Omega)}dt\\
	&+\frac{C}{\epsilon_2}h^{2k+\alpha}+\epsilon_2\int_{0}^{T}\|\phi^{x}_{R,2}\|^2_{H^{(2-\alpha)/2}(\Omega)}dt\\
	&+\frac{C}{\epsilon_3}h^{2k+\beta}+\epsilon_3\int_{0}^{T}\|\phi^{y}_{L,2}\|^2_{H^{(2-\beta)/2}(\Omega)}dt\\
	&+\frac{C}{\epsilon_4}h^{2k+\beta}+\epsilon_4\int_{0}^{T}\|\phi^{y}_{R,2}\|^2_{H^{(2-\beta)/2}(\Omega)}dt.\\
	\end{aligned}
	\end{equation*}
	By means of the standard projection property, inverse inequality, and trace inequality, there is
	\begin{equation*}
	\uppercase\expandafter{\romannumeral3}\leq Ch^{2k}+C\int_{0}^{T}\|v_1(s)\|^2_{L^2(\Omega)}dt.
	\end{equation*}
	Again,	by standard projection property, inverse and trace inequalities, we get
	\begin{equation*}
	\uppercase\expandafter{\romannumeral4}\leq \frac{C}{\epsilon_5}h^{2k}+\epsilon_5\int_0^T(\|\boldsymbol{\phi}_L\|^2_{L^2(\Omega)}+\|\boldsymbol{\phi}_R\|^2_{L^2(\Omega)})dt.
	\end{equation*}
	Similarly
	\begin{equation*}
	\uppercase\expandafter{\romannumeral5}\leq Ch^{2k+2}+\int_{0}^T \|v_1(s)\|^2_{L^2(\Omega)}ds.
	\end{equation*}
	So for sufficiently small $\epsilon_1$, $\epsilon_2$, $\epsilon_3$, $\epsilon_4$, $\epsilon_5$, we have
	\begin{equation}\label{equestof1}
	\|v_1(t)\|^2_{L^2(\Omega)}+\frac{1}{\tau_{max}}\int_{0}^{T}(v_1-v_2,v_1)_{\Omega}dt\leq Ch^{2k}+C\int_{0}^T \|v_1(s)\|^2_{L^2(\Omega)}ds,
	\end{equation}
Following the same procedure, one can get
	\begin{equation}\label{equestof2}
	\|v_2(t)\|^2_{L^2(\Omega)}+\frac{1}{\tau_{max}}\int_{0}^{T}(v_2-v_1,v_2)_{\Omega}dt\leq Ch^{2k}+C\int_{0}^T \|v_2(s)\|^2_{L^2(\Omega)}ds.
	\end{equation} Combining \eqref{equestof1}, \eqref{equestof2}, Gronwall's inequality, and the standard projection property leads to the desired bound.% Similarly, we can get the estimate of $\|v_2(t)\|_{L^2}$. Therefore, we get the predicted  estimate (\ref{errorestimate}) by the triangle inequality and standard approximation theory.
\end{proof}
\begin{remark}
	The above discretization and theoretical results are for two dimensional cases, which naturally work for the one dimensional case 
	%	It's easy to find that our present discretization and theoretical analysis are also suitable for solving the following one-dimensional fractional diffusion equation
	\begin{equation}\label{equation1D}
	\left\{
	\begin{aligned}
	&\frac{\partial u(x,t)}{\partial t}-d(~_{-\infty}D_{x}^{\alpha}u+~_{x}D_{\infty}^{\alpha}u)=f(x,t)~~~(x,t)\in \Omega\times[0,T],\\
	&u(x,0)=g(x)~~~~~~~~~~~~~~~x\in \Omega,\\
	&u(x,t)=0~~~~~~~~~~~~~~~~~~~x\in \mathbb{R}\backslash \Omega, ~~t\in [0,T],
	\end{aligned}
	\right.
	\end{equation}
    where $d>0$, %, we can also obtain the following error estimate, i.e., 
    and there exists the error estimate
	\begin{equation*}
	\begin{aligned}
	\|u-u_{1,h}\|_{L^2(\Omega)}+\|u-u_{2,h}\|_{L^2(\Omega)}\leq Ch^k,
	\end{aligned}
	\end{equation*}
	where $u$ denotes the exact solution, $u_{1,h}$, $u_{2,h}$ are the solutions of the central LDG scheme for Eq. \eqref{equation1D}, and $k$ is the order of polynomial.
\end{remark}
\begin{remark}
	The error estimate of Theorem \ref{errorth} is sub-optimal, the main reasons of which include two aspects. One is that the property of the projection can't  make the terms $(\boldsymbol{\phi}^e_{L,2}+\boldsymbol{\phi}^e_{R,2},\nabla v_1)_{I_{i,j}}$ and $(v^e_1,\nabla(\boldsymbol{\phi}^{i,j}_{L,2}+\boldsymbol{\phi}^{i,j}_{R,2}))_{\bar{I}_{i,j}}$ to be zero since they involve the inner products of piecewise polynomials on two different sets of meshes, and the other is that the lower regularity of $\mathbf{q}_L$ and $\mathbf{q}_R$ in the estimation of $\uppercase\expandafter{\romannumeral2} $ makes the convergence rate to be $\mathcal{O}(h^{k+\alpha/2})$. However, numerically the optimal $(k+1)$-th order accuracies  are observed. 
	% we observe the optimal $(k+1)$-th order accuracy numerically. 
	 And the same situation occurs for Eq.  \eqref{equation1D}.
\end{remark}
\section{Efficient numerical implementation}
Because of the nonlocality and singularity of the fractional operators, to effectively approximate the inner product $\left (~_0D^{(\alpha-1)/2}_xu,~_xD^{(\alpha-1)/2}_1v\right )$ becomes a key issue. We first give the following lemma.
\begin{lemma}[\cite{Podlubny1999}]\label{lemleftCR}
	If $ 0<\alpha<1 $, then
	\begin{equation}
	\frac{\partial}{\partial x}~_aD^{\alpha-1}_xu=~_aD^{\alpha-1}_x\frac{\partial}{\partial x}u+f(a)\frac{(x-a)^{-\alpha}}{\Gamma(1-\alpha)}
	\end{equation}
	and
	\begin{equation}
	-\frac{\partial}{\partial x}~_xD^{\alpha-1}_bu=-~_xD^{\alpha-1}_b\frac{\partial}{\partial x}u+f(b)\frac{(b-x)^{-\alpha}}{\Gamma(1-\alpha)}.
	\end{equation}
\end{lemma}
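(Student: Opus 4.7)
The plan is to convert the statement into a formula about the Riemann--Liouville fractional integral and then prove it by an integration-by-parts and Leibniz-rule argument. Since $0<\alpha<1$, the operator $~_aD^{\alpha-1}_x$ is really the fractional integral $~_aI^{1-\alpha}_x$, whose kernel has an integrable singularity at $\xi=x$. Setting $\mu=1-\alpha\in(0,1)$, the first identity I want to establish is
\begin{equation*}
\frac{\partial}{\partial x}\,{_aI^{\mu}_x}u(x) \;=\; {_aI^{\mu}_x}\!\left(\tfrac{\partial u}{\partial x}\right)(x) \;+\; u(a)\,\frac{(x-a)^{\mu-1}}{\Gamma(\mu)},
\end{equation*}
under the mild assumption that $u$ is continuously differentiable on $[a,b]$ (which is consistent with the regularity used elsewhere in the paper).

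First I would rewrite $~_aI^{\mu}_x u(x)=\frac{1}{\Gamma(\mu)}\int_a^x(x-\xi)^{\mu-1}u(\xi)\,d\xi$ and move a derivative onto $u$ by integration by parts, using the antiderivative $-(x-\xi)^{\mu}/\mu$ of $(x-\xi)^{\mu-1}$. Because $\mu>0$, the boundary contribution at $\xi=x$ vanishes, leaving only the contribution at $\xi=a$. This produces the clean representation
\begin{equation*}
{_aI^{\mu}_x}u(x) \;=\; \frac{u(a)(x-a)^{\mu}}{\Gamma(\mu+1)} \;+\; \frac{1}{\Gamma(\mu+1)}\int_a^x(x-\xi)^{\mu}u'(\xi)\,d\xi,
\end{equation*}
which is essentially an Abel-type reformulation in which the singularity has been tamed (the new kernel has exponent $\mu>0$).

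Second I would differentiate this representation term by term. The explicit polynomial piece gives $u(a)(x-a)^{\mu-1}/\Gamma(\mu)$ via $\Gamma(\mu+1)=\mu\Gamma(\mu)$. For the integral piece I would apply Leibniz's rule: the boundary contribution at $\xi=x$ is $(x-x)^{\mu}u'(x)=0$ (again thanks to $\mu>0$), and differentiating under the integral sign returns $\mu(x-\xi)^{\mu-1}$ in the kernel. Collecting the $\mu$ factor with $\Gamma(\mu+1)$ recovers $\Gamma(\mu)$ and yields exactly $~_aI^{\mu}_x u'(x)$, proving the first identity. The right-sided identity follows by the mirror substitution $\xi\mapsto a+b-\xi$ (equivalently, by repeating the argument with the kernel $(\xi-x)^{\mu-1}$ and picking up a boundary term at $\xi=b$, with the sign $-$ in front of $\partial_x$ canceling the sign produced by the chain rule).

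I do not anticipate a genuinely hard obstacle here; the only subtle point is justifying the interchange of differentiation and the improper integral near $\xi=x$. The integration-by-parts step is precisely what makes this legitimate, since it trades the non-integrable $(x-\xi)^{-\alpha}u'$-type kernel after naive differentiation for the integrable $(x-\xi)^{\mu-1}u'$ kernel, and the Leibniz boundary term then vanishes because $\mu=1-\alpha>0$. Under the assumption $u\in C^1[a,b]$ all manipulations are routine; if one wanted to weaken the hypothesis to $u\in AC[a,b]$ the same calculation goes through by standard density/approximation.
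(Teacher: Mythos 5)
Your argument is correct. Note that the paper itself gives no proof of this lemma --- it is quoted from Podlubny's book --- so there is nothing to compare against; your derivation (rewrite $~_aD^{\alpha-1}_x$ as the fractional integral $~_aI^{1-\alpha}_x$, integrate by parts once to shift the derivative onto $u$ and tame the kernel singularity, then differentiate the resulting representation by the Leibniz rule, the boundary term at $\xi=x$ vanishing because $1-\alpha>0$) is the standard one and is complete under the $C^1$ hypothesis you state. One small point worth flagging: the lemma as printed writes $f(a)$ and $f(b)$ in the correction terms where it must mean $u(a)$ and $u(b)$, and your computation correctly produces $u(a)\,(x-a)^{-\alpha}/\Gamma(1-\alpha)$ and $u(b)\,(b-x)^{-\alpha}/\Gamma(1-\alpha)$, which implicitly fixes that typo.
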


Let $u\in P^2_k(I_{iu,j})$ for $(x,y)\in I_{iu,j}$ and  $v\in P^2_k(I_{iv,j})$ for $(x,y)\in I_{iv,j}$, and $u=0$ for $(x,y)\in \Omega \backslash I_{iu,j}$  and $v=0$ for $(x,y)\in \Omega \backslash I_{iv,j}$, where $0\leq iu\leq iv \leq N-1$. Then, there is
\begin{equation*}
\begin{aligned}
&\left(~_0D^{(\alpha-1)/2}_xu,~_xD^{(\alpha-1)/2}_1v\right)_{\Omega}\\
=&\left(\frac{1}{\Gamma(1-(\alpha-1)/2)}\right)^2\int_{0}^{1}\int_0^1\frac{\partial}{\partial x}\left(\int_0^x(x-\xi)^{(\alpha-1)/2-1}u(\xi,y)d\xi \right)\\
&\qquad\qquad\quad\qquad\quad\qquad\quad\times\frac{\partial}{\partial x}\left(-\int_x^1(\xi-x)^{(\alpha-1)/2-1}v(\xi,y)d\xi\right) dxdy\\
=&\left(\frac{1}{\Gamma(1-(\alpha-1)/2)}\right)^2\int_{y_j}^{y_{j+1}}\int_{x_{iu}}^{x_{iv+1}}\frac{\partial}{\partial x}\left(\int_0^x(x-\xi)^{(\alpha-1)/2-1}u(\xi,y)d\xi \right)\\
&\qquad\qquad\quad\qquad\quad\qquad\quad\times\frac{\partial}{\partial x}\left(-\int_x^1(\xi-x)^{(\alpha-1)/2-1}v(\xi,y)d\xi\right) dxdy.
\end{aligned}
\end{equation*}
According to Lemma \ref{lemleftCR}, for some fixed integral point $\bar{x}>x_{iu+1}$, we have
\begin{equation}
\begin{aligned}
&\left .\frac{\partial}{\partial x}\left(\int_0^x(x-\xi)^{(\alpha-1)/2-1}u(\xi,y)d\xi \right)\right |_{x=\bar{x}}\\
=&\frac{\partial}{\partial x}\left. \left(\int_{x_{iu}}^{x}(x-\xi)^{(\alpha-1)/2-1}\tilde{u}(\xi,y)d\xi-\int_{x_{iu+1}}^{x}(x-\xi)^{(\alpha-1)/2-1}\tilde{u}(\xi,y)d\xi \right)\right |_{x=\bar{x}}\\
=&\int_{x_{iu}}^{\bar{x}}(x-\xi)^{(\alpha-1)/2-1}\frac{\partial}{\partial \xi}\tilde{u}(\xi,y)d\xi-\tilde{u}(x_{iu},y)\frac{(\bar{x}-x_{iu})}{\Gamma((\alpha-1)/2)}\\
&-\left(\int_{x_{iu+1}}^{x}(x-\xi)^{(\alpha-1)/2-1}\frac{\partial}{\partial \xi}\tilde{u}(\xi,y)d\xi-\tilde{u}(x_{iu+1},y)\frac{(\bar{x}-x_{iu+1})}{\Gamma((\alpha-1)/2)}\right),
\end{aligned}
\end{equation}
where $\tilde{u}$ is the smooth extension of $u$. As for $\bar{x}\in(x_{iu},x_{iu+1})$, it can be directly calculated.

The term $ \frac{\partial}{\partial x}\left(-\int_x^1(\xi-x)^{(\alpha-1)/2-1}v(\xi,y)d\xi\right) $ can be similarly calculated. So, by the above techniques, the inner product  $\left (~_0D^{(\alpha-1)/2}_xu,~_xD^{(\alpha-1)/2}_1v\right )$ can be efficiently calculated using numerical integration.
%numerically by using Lemma \ref{lemleftCR}.

\section{Numerical experiments}
\label{}
To demonstrate the performance of our proposed schemes, we provide one-  and two-dimensional examples to verify the accuracy. Here, we define the errors as
\begin{equation*}
E_{i,h}=u(t_n)-u^n_{i,h}, ~~i=1,2,
\end{equation*}
where $u^n_{1,h}$ and $u^n_{2,h}$ mean the numerical solutions of $u$ at time $t_n$ on two sets of meshes $\{I_{i,j}\}$ and $\{\bar{I}_{i,j}\}$ with mesh size $h$, respectively.

\begin{example}\label{example1}
	We consider the one-dimensional problem  \eqref{equation1D} with the exact solution
	\begin{equation*}
	u(x,t)=e^{2t}x^3(1-x)^3,
	\end{equation*}
	so the initial data is
	\begin{equation*}
	g(x)=x^3(1-x)^3
	\end{equation*}
	and the source term is
	\begin{equation*}
	f(x,t)=2e^{2t}x^3(1-x)^3-e^{2t}\left (~_0D^{\alpha}_xx^3(1-x)^3+~_xD^{\alpha}_1x^3(1-x)^3\right ),
	\end{equation*}
	which can be calculated by numerical integration.
	Here we set $T=0.1$. To investigate the convergence in space and reduce the influence from temporal discretization, we take $\tau_{max}=0.1h^{\alpha}$ and $\tau=0.1\tau_{max}$ when the order of approximation polynomial $k=1$, and $\tau_{max}=0.005h^{\alpha}$ and $\tau=0.01\tau_{max}$ when $k=2$ respectively, and Tables \ref{tab:1dN1} and  \ref{tab:1dN2} provide the numerical errors and convergence rates when $\alpha=1.1, 1.5, 1.9$, which show that our method can achieve the optimal $(k+1)$-th convergence order.
	\begin{table}[htpb]
		\caption{$L^2$ errors and convergence rates at $T=0.1$ with the order of polynomial $k=1$}
		\label{tab:1dN1}
		\begin{tabular}{c|c|cccccc}
			\hline
			$\alpha$&$1/h$&          8 &         16 &         32 &         64 &        128 &        256 \\
			\hline
			&$E_{1,h}$ &  3.860E-04 &  7.364E-05 &  1.366E-05 &  2.880E-06 &  6.793E-07 &  1.706E-07 \\
			
			1.1 & &  rate          &    2.3901  &    2.4308  &    2.2457  &    2.0837  &    1.9938  \\
			
			&$E_{2,h}$ &  3.852E-04 &  7.371E-05 &  1.367E-05 &  2.880E-06 &  6.793E-07 &  1.706E-07 \\
			
			& &rate            &    2.3856  &    2.4311  &    2.2464  &    2.0840  &    1.9939  \\
			\hline
			&$E_{1,h}$ &  2.259E-04 &  5.352E-05 &  1.223E-05 &  2.852E-06 &  6.834E-07 &  1.668E-07 \\
			
			1.5 & & rate           &    2.0774  &    2.1301  &    2.0999  &    2.0614  &    2.0347  \\
			
			&$E_{2,h}$ &  2.276E-04 &  5.418E-05 &  1.228E-05 &  2.855E-06 &  6.835E-07 &  1.668E-07 \\
			
			& & rate           &    2.0705  &    2.1416  &    2.1044  &    2.0625  &    2.0349  \\
			\hline
			&$E_{1,h}$ &  2.483E-04 &  5.896E-05 &  1.442E-05 &  3.526E-06 &  8.603E-07 &  2.098E-07 \\
			
			1.9 & &  rate          &    2.0744  &    2.0320  &    2.0319  &    2.0349  &    2.0360  \\
			
			&$E_{2,h}$ &  2.511E-04 &  6.038E-05 &  1.452E-05 &  3.530E-06 &  8.605E-07 &  2.098E-07 \\
			
			& &  rate          &    2.0561  &    2.0565  &    2.0399  &    2.0365  &    2.0363  \\
			\hline
		\end{tabular}
		
	\end{table}
	
	\begin{table}[htpb]
		\caption{$L^2$ errors and convergence rates at $T=0.1$ with the order of polynomial $k=2$}
		\label{tab:1dN2}
		\begin{tabular}{c|c|cccccc}
			\hline
			$\alpha$& $1/h$           &          4 &          8 &         16 &         32 &         64 &        128 \\
			\hline
			&    $E_{1,h}$        &  8.821E-04 &  5.884E-05 &  1.240E-05 &  1.866E-06 &  2.169E-07 &  2.215E-08 \\
			
			1.1 &            &   rate         &    3.9062  &    2.2468  &    2.7319  &    3.1051  &    3.2916  \\
			
			&   $E_{2,h}$         &  8.832E-04 &  5.874E-05 &  1.239E-05 &  1.865E-06 &  2.168E-07 &  2.215E-08 \\
			
			&            &      rate      &    3.9102  &    2.2454  &    2.7314  &    3.1049  &    3.2915  \\
			\hline
			&   $E_{1,h}$         &  4.765E-04 &  3.675E-05 &  5.296E-06 &  7.023E-07 &  8.393E-08 &  9.691E-09 \\
			
			1.5 &            &  rate          &    3.6969  &    2.7947  &    2.9148  &    3.0648  &    3.1144  \\
			
			&  $E_{2,h}$          &  4.785E-04 &  3.651E-05 &  5.279E-06 &  7.014E-07 &  8.388E-08 &  9.688E-09 \\
			
			&            &      rate      &    3.7122  &    2.7899  &    2.9120  &    3.0639  &    3.1140  \\
			\hline
			&  $E_{1,h}$          &  2.729E-04 &  2.831E-05 &  3.343E-06 &  3.844E-07 &  4.377E-08 &  5.109E-09 \\
			
			1.9 &            &  rate          &    3.2693  &    3.0818  &    3.1205  &    3.1347  &    3.0989  \\
			
			&   $E_{2,h}$         &  2.766E-04 &  2.786E-05 &  3.301E-06 &  3.816E-07 &  4.359E-08 &  5.098E-09 \\
			
			&            &      rate      &    3.3114  &    3.0773  &    3.1127  &    3.1300  &    3.0962  \\
			\hline
		\end{tabular}
	\end{table}
\end{example}

\begin{example}
	Consider the two-dimensional problem \eqref{equation2D} with the exact solution
	\begin{equation*}
	u(x,y,t)=1000e^tx^3(1-x)^3y^3(1-y)^3,
	\end{equation*}
	so the initial data is
	\begin{equation*}
	g(x,y)=1000x^3(1-x)^3y^3(1-y)^3,
	\end{equation*}
	and taking $d_1=-1/(2\cos(\alpha\pi/2))$ and $d_2=-1/(2\cos(\beta\pi/2))$ leads to the source term
	\begin{equation*}
	\begin{aligned}
	f(x,y,t)=&1000e^tx^3(1-x)^3y^3(1-y)^3\\
	&-1000d_1e^t\left (~_0D^{\alpha}_xx^3(1-x)^3y^3(1-y)^3+~_xD^{\alpha}_1x^3(1-x)^3y^3(1-y)^3\right )\\
	&-1000d_2e^t\left (~_0D^{\beta}_yx^3(1-x)^3y^3(1-y)^3+~_yD^{\beta}_1x^3(1-x)^3y^3(1-y)^3\right ),
	\end{aligned}
	\end{equation*}
	which can be calculated by numerical integration.
	Here we set $T=0.1$ and the order of polynomial $k=1$.  We take $\tau_{max}=0.02h^{\alpha}$ and $\tau=0.1\tau_{max}$ to investigate the convergence in space and reduce the influence from temporal discretization. Table \ref{tab:2dN1} provides the convergence orders when $\alpha$ and $\beta$ equal to 1.1, 1.5, 1.9, respectively,  which shows our method can achieve the optimal $(k+1)$-th convergence order and validates the effectiveness of our algorithm.
	
	\begin{table}[htpb]
		\caption{$L^2$ errors and convergence rates at $T=0.1$ with the order of polynomial $k=1$}
		\label{tab:2dN1}
		\begin{tabular}{c|c|ccccc}
			\hline
			$(\alpha,\beta)$&$1/h$            &          4 &          8 &         12 &         16 &         20 \\
			\hline
			&$E_{1,h}$            &  2.190E-02 &  7.518E-03 &  2.751E-03 &  1.417E-03 &  7.937E-04 \\
			
			(1.1,1.1) &            &          rate &    1.5423  &    2.4794  &    2.3064  &    2.5973  \\
			
			&$E_{2,h}$            &  2.162E-02 &  7.495E-03 &  2.743E-03 &  1.416E-03 &  7.943E-04 \\
			
			&            & rate           &    1.5282  &    2.4794  &    2.2971  &    2.5921  \\
			\hline
			&$E_{1,h}$            &  1.675E-02 &  5.516E-03 &  2.377E-03 &  1.249E-03 &  7.468E-04 \\
			
			(1.5,1.5) &            &         rate &    1.6026  &    2.0759  &    2.2378  &    2.3042  \\
			
			&$E_{2,h}$            &  1.610E-02 &  5.490E-03 &  2.373E-03 &  1.248E-03 &  7.464E-04 \\
			
			&            & rate           &    1.5522  &    2.0683  &    2.2348  &    2.3025  \\
			\hline
			&$E_{1,h}$            &  1.483E-02 &  5.475E-03 &  2.692E-03 &  1.560E-03 &  1.005E-03 \\
			
			(1.9,1.9) &            &          rate &    1.4379  &    1.7506  &    1.8973  &    1.9694  \\
			
			&$E_{2,h}$            &  1.391E-02 &  5.453E-03 &  2.690E-03 &  1.559E-03 &  1.005E-03 \\
			
			&            &  rate          &    1.3512  &    1.7428  &    1.8953  &    1.9687  \\
			\hline
		\end{tabular}
	\end{table}
	
\end{example}

\section{Conclusions}
\label{}

This paper provides the weak formulation for numerically solving the one- and two-dimensional space fractional diffusion equations by the central LDG method, which uses the informations of the solution on the overlapping cells and needn't the numerical flux any more. The numerical stability is proved and error estimates are presented. Finally, the numerical experiments are performed to verify the theoretical results and effectiveness of the schemes.

%In this paper, we use the central LDG method to solve the one- and two-dimensional space fractional diffusion equations, which use the informations of the solution on the overlapping cells and needn't the numerical flux any more; stability analysis and error estimates are also given. At last, we verify the theoretical analysis by numerical experiments.
%% The Appendices part is started with the command \appendix;
%% appendix sections are then done as normal sections
%% \appendix

\section{Acknowledgements}
\label{}
This work was supported by the National Natural Science Foundation of China under grant no. 11671182, and the Fundamental Research Funds for the Central Universities under grants no. lzujbky-2018-ot03 and no. lzujbky-2017-ot10.
%% If you have bibdatabase file and want bibtex to generate the
%% bibitems, please use
%%
%%  \bibliographystyle{elsarticle-num}
%%  \bibliography{<your bibdatabase>}

%% else use the following coding to input the bibitems directly in the
%% TeX file.
%\section*{References}

\end{document}